\def\[{\begin{equation}}
\def\]{\end{equation}}
\def\lb{\left(}
\def\rb{\right)}
\def\nn{\nonumber}
\def\t{\top}
\def\A{{\mathcal A}}
\def\Bc{{\mathcal B}}
\def\Cc{{\mathcal C}}
\def\I{{\mathcal I}}
\def\R{{\mathbb R}}
\def\bzeta{{\bm \zeta}}
\def\btau{{\bm \tau}}
\def\btheta{{\bm \vartheta}}
\def\bPhi{{\bm\Phi}}
\def\bUpsilon{{\bm\Upsilon}}
\def\L{{\mathscr L}}
\def\C{{\mathscr C}}
\def\B{{\mathscr B}}
\numberwithin{equation}{section}
\begin{document}
\graphicspath{{./PIC/}}

\title{Higher-degree eigenvalue complementarity problems for tensors}


\author{Chen Ling \and Hongjin He \and Liqun Qi}


\institute{C. Ling \and H. He \at
Department of Mathematics, School of Science, Hangzhou Dianzi University, Hangzhou, 310018, China.\\
\email{macling@hdu.edu.cn}
\and H. He \at
\email{hehjmath@hdu.edu.cn}
\and L. Qi \at
Department of Applied Mathematics, The Hong Kong Polytechnic University, Hung Hom, Kowloon, Hong Kong. \\
\email{maqilq@polyu.edu.hk}
 }

\date{Received: date / Accepted: date}

\maketitle

\begin{abstract}
In this paper, we introduce a unified framework of Tensor Higher-Degree Eigenvalue Complementarity Problem (THDEiCP), which goes beyond the framework of the typical Quadratic Eigenvalue Complementarity Problem (QEiCP) for matrices. First, we study some topological properties of higher-degree cone eigenvalues of tensors. Based upon the symmetry assumptions on the underlying tensors, we then reformulate THDEiCP as a weakly coupled homogeneous polynomial optimization problem, which might be greatly helpful for designing implementable algorithms to solve the problem under consideration numerically. As more general theoretical results, we present the results concerning existence of solutions of THDEiCP without symmetry conditions.
Finally, we propose an easily implementable algorithm to solve THDEiCP, and report some computational results.

\keywords{Tensor \and Higher-degree cone eigenvalue \and Eigenvalue complementarity problem \and Polynomial optimization problem\and Augmented Lagrangian method \and Alternating direction method of multipliers}


\subclass{15A18 \and 15A69 \and 65K15\and 90C30\and 90C33}

\end{abstract}

\section{Introduction}\label{Sec1}
Let $A$, $B$, $C\in \R^{n\times n}$ be given matrices, the generic Quadratic Eigenvalue Complementarity Problem (QEiCP) widely studied in recent papers, e.g., see \cite{BIJ14,FJFI14,S01}, is that of finding $(\lambda,x)\in \mathbb{R}\times \mathbb{R}^n$ such that
\begin{equation}\label{QMEiCP}
\left\{
\begin{array}{l}
K\ni x \bot (\lambda^2A+\lambda B+C)x\in K^*,\\
e^\top x=1,
\end{array}
\right.
\end{equation}
where $e=(1,1,\ldots, 1)^\top \in \mathbb{R}^n$, $K$ is a closed convex cone in $\mathbb{R}^n$, and $K^*$ refers to the dual cone of $K$, which is defined by
$$
K^*:=\left\{y\in \mathbb{R}^n~|~\langle y,x\rangle \geq 0,~\forall~x\in K\right\}.
$$
Without loss of generality, the linear constraint $e^\t x=1$ in \eqref{QMEiCP} plays an important role in preventing the $x$ component of a solution to vanish. Notice that the leading matrix $A$ could be singular, and in particular, the QEiCP immediately reduces to the classical Eigenvalue Complementarity Problem (EiCP) when $A=0$. Clearly, QEiCP is an interesting generalization of the classical EiCP with embracing an extra quadratic term on $\lambda$. When $K$ is taken as the whole space $\mathbb{R}^n$, then \eqref{QMEiCP} becomes the well studied unconstrained quadratic eigenvalue problem, which frequently arises in areas such as the electric power systems \cite{MR06}, the dynamic analysis of acoustic systems \cite{BDRS00} and linear stability of flows in fluid mechanics \cite{MT00}, to name a few. We refer the reader to \cite{TM01} for a survey on the unconstrained version. If the matrices $A,B,C$ are all symmetric, QEiCP and EiCP are called symmetric, respectively.

Since the seminal work on EiCP \cite{CFJM01} devoted to the study of static equilibrium states of mechanical systems with unilateral friction, both EiCP and QEiCP have been well discussed from theoretical and numerical perspective in the literature, e.g., see \cite{AS13,PS11,FJFI14,FJSF14,FJSFu14,JRRS08,JSR07,JSRR09,S01}, where these papers only focus on matrix cases. For instance, in order to study the  sufficient conditions for the existence of solutions of QEiCP, the so-called {\it co-regularity} and {\it co-hiperbolicity} properties were introduced in \cite{S01}. Usually, the {\it co-regularity} on matrix $A\in\R^{n\times n}$, i.e., $x^\top Ax\neq0$ for any $x\in K$, means that $A$ or $-A$ is strictly $K$-positive. We say that $(A,B,C)\in{\mathcal M}_n:=\R^{n\times n}\times\R^{n\times n}\times\R^{n\times n}$ satisfies {\it co-hiperbolicity} property, if
$$
(x^\top Bx)^2\geq 4(x^\top Ax)(x^\top Cx),~~~~\forall~x\in K.
$$
However, checking whether a given matrices triplet $(A,B,C)\in{\mathcal M}_n$ satisfies co-hyperbolicity or not is co-NP-complete, which is essentially the verification problem of {\it copositiveness} (see Definition \ref{copositivedef}) of a related fourth order $n$-dimensional tensor \cite{Q13,SQ14}.

In recent decade, tensor, which is a natural extension of the concept of matrices, is on the timely topic of high-dimensional data representation in terms of theoretical analysis and algorithmic design because of its widespread applications in engineering. A tensor, namely, is a multidimensional array, whose order is the number of dimensions. Let $m$ and $n$ be positive integers. We call $\mathcal{A}= (a_{i_1\ldots i_m} )$, where $a_{i_1\ldots i_m} \in \mathbb{R}$ for $1\leq i_1, \ldots, i_m\leq n$, a real $m$-th
order $n$-dimensional real square tensor. For the sake of convenience, we denote by $\mathcal{T}_{m,n}$ the space of $m$-th order $n$-dimensional real square tensors. Furthermore, a tensor $\mathcal{A}\in \mathcal{T}_{m,n}$ is called {\it symmetric} if its entries are invariant under any permutation of its indices.
For a vector $x = (x_1, \ldots,x_n)^\top\in \mathbb{C}^n$ and a tensor $\mathcal{A}= (a_{i_1\ldots i_m})\in  \mathcal{T}_{m,n}$,  $\mathcal{A}x^{m-1}$ is an $n$-dimensional vector with its $i$-th component defined by
$$
(\mathcal{A}x^{m-1})_i=\sum_{i_2,\ldots,i_m=1}^na_{ii_2\ldots i_m}x_{i_2}\cdots x_{i_m},~~{\rm for~}i=1,2,\ldots,n,$$
and $\mathcal{A}x^m$ is the value at $x$ of a homogeneous polynomial, defined by
$$\mathcal{A}x^{m}=\sum_{i_1,i_2,\ldots,i_m=1}^na_{i_1i_2\ldots i_m}x_{i_1}x_{i_2}\cdots x_{i_m}.$$

Although tensor-related problems have been received considerable attention many years ago, the history of research on eigenvalues (eigenvectors) of a square tensor can be traced back to the pioneer works independently introduced by Qi \cite{Q05} and Lim \cite{Lim05}. Comparatively speaking, the developments of eigenvalue-related problems for tensors are still in their infancy. For given tenors $\mathcal{A},\mathcal{B}\in \mathcal{T}_{m,n}$, we say that $(\mathcal{A},\mathcal{B})$ is an {\it identical singular} pair, if
$$
\left\{x\in \mathbb{C}^n\backslash\{0\}~|~\mathcal{A}x^{m-1}=0,\;\mathcal{B}x^{m-1}=0\right\}\neq\emptyset.
$$
When assuming that $(\mathcal{A},\mathcal{B})$ is not an identical singular pair, we
say $(\lambda, x)\in \mathbb{C}\times(\mathbb{C}^n\backslash\{0\})$ is an eigenvalue-eigenvector pair of $(\mathcal{A}, \mathcal{B})$, if the following $n$-system of equations
\begin{equation}\label{ABEigen}
(\mathcal{A}-\lambda \mathcal{B})x^{m-1}=0
\end{equation}
possesses a {\it nonzero} solution $x$. This unified definition of eigenvalue-eigenvector pair for tensors was introduced by Chang et al. \cite{CPZ09}.  In recent years, it is well documented in the literature that tensors and eigenvalues/eigenvectors of tensors have fruitful applications in various fields such as magnetic resonance imaging \cite{BV08,QYW10}, higher-order Markov chains \cite{NQZ09} and best-rank one approximation in data analysis \cite{QWW09}, whereby many nice properties such as the Perron-Frobenius theorem for eigenvalues/eigenvectors of nonnegative square tensors have been well established, see, e.g., \cite{CPZ08,YY10}. All these encourage us to consider tensor eigenvalue complementarity problems. However, to the best of our knowledge, the most recent paper \cite{LHQ15} is the first work devoted to the Tensor Generalized Eigenvalue Complementarity Problem (TGEiCP), whereas leaving {\it higher-degree} cases a big gap. Therefore, the main objective of this paper is to fill out this gap.

In this paper, we consider the Tensor Higher-Degree Eigenvalue Complementarity Problem (THDEiCP), which goes beyond the framework of QEiCP and further generalizes TGEiCP. Mathematically, the THDEiCP can be characterized as finding a scalar $\lambda\in \mathbb{R}$ and a vector $x\in \mathbb{R}^n\backslash\{0\}$ such that
\begin{equation}\label{QTECiP}
K\ni  x\perp( \lambda ^m\mathcal{A}+\lambda\mathcal{B}+\mathcal{C}) x^{m-1}\in K^*,
\end{equation}
where $\mathcal{A}=(a_{i_1i_2\ldots i_m})$, $\mathcal{B}=(b_{i_1i_2\ldots i_m})$, and $\mathcal{C}=(c_{i_1i_2\ldots i_m})\in \mathcal{T}_{m,n}$. Correspondingly, the scalar $\lambda$ and the nonzero vector $x$ satisfying system \eqref{QTECiP} are respectively called an $m$-degree $K$-eigenvalue
of the tensors triplet $\mathcal{Q}:=(\mathcal{A, B, C})\in \mathcal{F}_{m,n}:=\mathcal{T}_{m,n}\times \mathcal{T}_{m,n}\times \mathcal{T}_{m,n}$ and an associated $K$-eigenvector. Alternatively, $(\lambda, x)$ is also called an $m$-degree $K$-eigenpair of $\mathcal{Q}$. Throughout, the set of all $m$-degree $K$-eigenvalues of $\mathcal{Q}$ is called the $m$-degree $K$-spectrum of $\mathcal{Q}$, i.e.,
\begin{equation}\label{sigmaK}
\sigma(\mathcal{Q},K):=\left\{\lambda\in \mathbb{R}~|~\exists~ x\in \mathbb{R}^n\backslash\{0\}, ~K\ni x\perp (\lambda ^m\mathcal{A} + \lambda\mathcal{B}+\mathcal{C} )x^{m-1}\in K^*\right\}.
\end{equation}
If $K:=\mathbb{R}_+^n$, the $m$-degree $K$-eigenvalue/eigenvector of the tensors triplet $\mathcal{Q}$ is called the $m$-degree Pareto-eigenvalue/eigenvector of $\mathcal{Q}$, and the $m$-degree $K$-spectrum of $\mathcal{Q}$ is called the $m$-degree Pareto-spectrum of $\mathcal{Q}$. If $x\in {\rm int}(K)$ (resp. $x\in \mathbb{R}_{++}^n)$, then $\lambda$ is called a strict $m$-degree $K$-eigenvalue (resp. Pareto-eigenvalue) of $\mathcal{Q}$.

It is clear from \eqref{QTECiP} that THDEiCP covers TGEiCP and QEiCP as the special cases. More concretely, by taking $\A=0$, model \eqref{QTECiP} immediately reduces to TGEiCP studied in \cite{LHQ15}. When we set $m=2$, THDEiCP clearly corresponds to the QEiCP \eqref{QMEiCP}. Like \cite{Se99}, on the other hand, we can further establish the connection between THDEiCP and a class of differential inclusions with nonconvex processes $\Gamma$ defined by
$$
Gr(\Gamma):=\{(x,y)\in \mathbb{R}^n\times \mathbb{R}^n~|~K\ni x\bot(\mathcal{A}y^{m-1}-\mathcal{B}x^{m-1})\in K^* \}.
$$
Accordingly, for the differential inclusions defined by $\dot{{\bm u}}(t)\in \Gamma({\bm u}(t))$, as noticed already by Rockafellar \cite{Ro79}, the change of variables ${\bm u}(t)={\rm exp}(\lambda t)x$ with $\lambda>0$ leads to the equivalent system $\lambda x\in \Gamma(x)$. Therefore, if the pair $(\lambda, x)$ satisfies $\lambda x\in \Gamma(x)$, then the trajectory $t\mapsto {\rm exp}(\lambda t)x$ is a solution to the considered differential inclusions. Moreover,
if the trajectory constructed above is nonconstant, then $x$ must be a nonzero vector; this requires
$(\lambda, x)$ to be a solution of THDEiCP with $\mathcal{C}=0$ because of $\lambda >0$.

The paper is divided into six sections. As far as we know, it might be the first work on THDEiCP, we thus do not know whether the topological properties of QEiCP still hold for the newly introduced model. In Section \ref{TopProp}, we first study, as briefly as possible, some topological properties such as {\it closedness}, {\it boundedness}, and {\it upper-semicontinuity} of the $m$-degree $K$-spectrum given by \eqref{sigmaK}, in addition to estimating upper bounds on the number of eigenvalues of THDEiCP. With the preparations on these topological properties, ones may be further concerned with that how to solve the model under consideration. In Section \ref{OptForm}, we reformulate the special case of THDEiCP with symmetric $\A$ and $\Bc$ as a {\it weakly coupled} polynomial optimization problem for the case where $K:=\mathbb{R}_+^n$, which would potentially facilitate the algorithmic design. From a theoretical perspective, in Section \ref{Existence}, we establish the results concerning existence of solutions of THDEiCP without symmetry assumptions on $\A$ and $\Bc$. Based upon the augmented Lagrangian method, in Section \ref{NumAlg}, we propose an implementable splitting algorithm to solve the resulting polynomial optimization reformulation of the symmetric THDEiCP and report some preliminary results. Finally, we give some concluding remarks in Section \ref{Concl}.

\medskip

\noindent{\bf Notation.} Let $\mathbb{R}^n$ denote the real Euclidean space of column vectors of length $n$, which is equipped with the standard inner product $\langle y,x\rangle=y^\top x$ and the associated
norm. The superscript `$\top $' indicates transposition and the symbol `$\bot$' represents orthogonality. Denote $\mathbb{R}_+^n=\{x\in \mathbb{R}^n~|~x\geq 0\}$ and $\mathbb{R}_{++}^n=\{x\in \mathbb{R}^n~|~x> 0\}$. For a vector $x\in \mathbb{R}^n$  and a subset $J$ of the index set $[n]:=\{1,2,\ldots,n\}$, we use the notation $x_J$ for the $|J|$ dimensional sub-vector of $x$, which is obtained by deleting all components $i\not\in J$, where the symbol $|J|$ denotes the cardinality of $J$. For a vector $x\in \mathbb{R}^n$ and an integer $r\geq 0$, denote $x^{[r]}=(x_1^r,x_2^r,\ldots, x_n^r)^\top$, and denote by ${\rm diag}(x)$ the $n\times n$ diagonal matrix containing $x_i$ in its diagonal. Moreover, for $\mathcal{A}\in \mathcal{T}_{m,n}$, we denote the principal sub-tensor of $\mathcal{A}$ by $\mathcal{A}_J$, which is obtained by homogeneous polynomial $\mathcal{A}x^m$ for all $x=(x_1,x_2,\ldots,x_n)^\top$ with $x_i=0$ for $[n]\backslash J$. So, $\mathcal{A}_J\in \mathcal{T}_{m,|J|}$. Denote by $e\in \mathbb{R}^n$ with all entries being $1$, i.e., $e=(1,1,\ldots,1)^\top$. Denote by $\mathcal{I}=(\delta_{i_1\ldots i_m})$ the unit tensor in $\mathcal{T}_{m,n}$, where $\delta_{i_1\ldots i_m}$ is the Kronecker symbol
$$
\delta_{i_1\ldots i_m}=\left\{
\begin{array}{ll}
1,&\;\;{\rm if~}i_1=\ldots =i_m,\\
0,&\;\;{\rm otherwise}.
\end{array}
\right.
$$

\section{Some basic properties of $K$-spectrum}\label{TopProp}

In this section, we summarize some basic definitions and study some basic topological properties of $m$-degree cone spectrum, which will be used in subsequent sections.

We first give the concept of cone positive square tensors, which is a generalized concept of copositive square tensor introduced in \cite{Q13} and studied in \cite{SQ14}.

\begin{definition}\label{copositivedef}
Let $K$ be a closed convex cone in $\mathbb{R}^n$ and $\mathcal{G}\in \mathcal{T}_{m,n}$. We say that $\mathcal{G}$ is a (resp. strictly) $K$-positive tensor, if $\mathcal{G}x^m\geq 0$ (resp. $>0$) for any $x\in K\backslash\{0\}$. If $K=\mathbb{R}_+^n$, the (strictly) $K$-positive tensor $\mathcal{G}$ is said the (strictly) copositive tensor.
\end{definition}

It is obvious from the notation of $\mathcal{F}_{m,n}$ that $\mathcal{F}_{m,n}$ is a linear space. The distance between two elements $\mathcal{Q}_i=(\mathcal{A}_i,\mathcal{B}_i,\mathcal{C}_i)\in\mathcal{F}_{m,n}~(i=1,2)$ is measured by means of
the expression
$$
\|\mathcal{Q}_1-\mathcal{Q}_2\|_F=\left\{\|\mathcal{A}_1-\mathcal{A}_2\|_F^2+\|\mathcal{B}_1-\mathcal{B}_2\|_F^2+\|\mathcal{C}_1-\mathcal{C}_2\|_F^2\right\}^{\frac{1}{2}},
$$
where
$$
\|\mathcal{A}\|_F=\sqrt{\sum_{1\leq i_1,\ldots,i_m\leq n}a^2_{i_1\ldots i_m}},~~~\forall~\mathcal{A}=(a_{i_1\ldots i_m})\in \mathcal{T}_{m,n}.
$$

Denote by $\C(\mathbb{R}^n)$ the set of nonzero closed convex cones in $\mathbb{R}^n$, which is associated with the natural metric defined by
$$
\delta(K_1,K_2) := \sup_{\|z\|\leq 1}|{\rm dist}(z,K_1)- {\rm dist}(z,K_2)|,
$$
where ${\rm dist}(z,K) := {\rm inf}_{u\in K}\|z-u\|$ stands for the distance from $z$ to $K$. An equivalent
way of defining $\delta$ is
$$\delta(K_1,K_2) = {\rm haus}(K_1\cap \B_n,K_2 \cap \B_n),$$
where $\B_n$ is the closed unit ball in $\mathbb{R}^n$, and
$$
{\rm haus}(\C_1,\C_2) := \max\left\{\sup_{z\in \C_1}{\rm dist}(z,\C_2), \sup_{z\in \C_2}{\rm dist}(z,\C_1)\right\}
$$
stands for the Hausdorff distance between the compact sets $\C_1,\C_2\subset \mathbb{R}^n$ (see \cite[pp. 85-86]{Be93}). General information on the metric $\delta$ can be consulted in
the book by Rockafellar and Wets \cite{RW98}. According to \cite{WW67}, the
operation $K\mapsto  K^*$ is an isometry on the space $(\C(\mathbb{R}^n), \delta)$, that is to say,
$$
\delta(K_1^*,K_2^*)=\delta(K_1,K_2), ~~~{\rm for~all~} K_1,K_2\in \C(\mathbb{R}^n).
$$
The basic topological properties of the mapping $\sigma : \mathcal{F}_{m,n} \times \C(\mathbb{R}^n)\rightarrow 2^{\mathbb{R}}$, defined in \eqref{sigmaK}, are listed
in the next proposition. This proposition is a tensor version of generalizing the results presented in \cite{S01}. As far as semicontinuity concepts are concerned, we use the
following terminology (cf. Section 6.2 in \cite{Be93}):

\begin{definition}
Let $W$ and $Y$ be two topological spaces. The mapping $\Psi:W\rightarrow 2^Y$ is said to be upper-semicontinuous (resp. lower-semicontinuous) if the set $$
\{w\in W~|~\Psi(w)\subset U\}~~~\left({\rm resp.~~}\{w\in W~|~\Psi(w)\cap U\neq\emptyset\}\right)
$$
is open, whenever $U\subset Y$ is open.
\end{definition}
\begin{definition}
Let $\mathcal{Q}=(\mathcal{A,B,C})\in \mathcal{F}_{m,n}$ and $K\in \C(\mathbb{R}^n)$. We say that $\mathcal{Q}$ is $K$-regular if the leading tensor $\mathcal{A}$ satisfies
$$
\mathcal{A}x^m\neq 0,~~ \forall~ x\in K\backslash \{0\}.
$$
\end{definition}

 It is obvious that, if $\mathcal{Q}$ is $K$-regular, then the leading tensor $\mathcal{A}$ in $\mathcal{Q}$ or $-\mathcal{A}$ is $K$-positive.
\begin{proposition}\label{Propt5}
The following three statements are true:
\begin{itemize}
\itemindent8pt
\item[{\rm(i)}.] The set $\Sigma:=\{(\mathcal{Q},K,\lambda) \in \mathcal{F}_{m,n} \times \C(\mathbb{R}^n)\times \mathbb{R}~|~\lambda\in \sigma(\mathcal{Q},K)\}$ is closed in the product space $\mathcal{F}_{m,n} \times \C(\mathbb{R}^n)\times \mathbb{R}$. In particular,
for any $(\bar{\mathcal{Q}},\bar K)\in \mathcal{F}_{m,n} \times \C(\mathbb{R}^n)$, $\sigma(\bar {\mathcal{Q}},\bar K)$ is a closed subset of $\mathbb{R}$;
\item[{\rm(ii)}.] Let $(\bar{\mathcal{Q}},\bar K)\in \mathcal{F}_{m,n}\times \C(\mathbb{R}^n)$. If $\bar{\mathcal{Q}}$ is $\bar K$-regular, then the mapping $\sigma:\mathcal{F}_{m,n} \times \C(\mathbb{R}^n)\rightarrow 2^\mathbb{R}$ is locally bounded at $(\bar{\mathcal{Q}},\bar K)$, i.e., $\bigcup_{(\mathcal{Q},K)\in \mathcal{N}}\sigma(\mathcal{Q},K)$ is bounded for some neighborhood $\mathcal{N}$ of $(\bar {\mathcal{Q}},\bar K)$.
\item[{\rm(iii)}.] Let $(\bar{\mathcal{Q}},\bar K)\in \mathcal{F}_{m,n}\times \C(\mathbb{R}^n)$. If $\bar{\mathcal{Q}}$ is $\bar K$-regular, then $\sigma$ is upper-semicontinuous at $(\bar{\mathcal{Q}},\bar K)$.
\end{itemize}
\end{proposition}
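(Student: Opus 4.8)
The plan is to prove (i) by a sequential compactness argument, and then to obtain (ii) and (iii) from (i) together with a blow-up argument exploiting $\bar K$-regularity. For (i), let $(\mathcal{Q}_k,K_k,\lambda_k)\in\Sigma$ converge to $(\bar{\mathcal{Q}},\bar K,\bar\lambda)$ in $\mathcal{F}_{m,n}\times\C(\R^n)\times\R$. For each $k$ pick an eigenvector; since \eqref{QTECiP} is positively homogeneous in $x$, I may rescale so that $\|x_k\|=1$ and then extract a subsequence with $x_k\to\bar x$, $\|\bar x\|=1$, hence $\bar x\neq 0$. Set $y_k:=(\lambda_k^m\mathcal{A}_k+\lambda_k\mathcal{B}_k+\mathcal{C}_k)x_k^{m-1}$ and $\bar y:=(\bar\lambda^m\bar{\mathcal{A}}+\bar\lambda\bar{\mathcal{B}}+\bar{\mathcal{C}})\bar x^{m-1}$. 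The map $(\mathcal{Q},\lambda,x)\mapsto(\lambda^m\mathcal{A}+\lambda\mathcal{B}+\mathcal{C})x^{m-1}$ is jointly continuous (it is polynomial in all its arguments), so $y_k\to\bar y$, and passing to the limit in $\langle x_k,y_k\rangle=0$ gives $\langle\bar x,\bar y\rangle=0$. For the primal membership, $\|x_k\|\le 1$ gives ${\rm dist}(x_k,\bar K)\le{\rm dist}(x_k,K_k)+\delta(K_k,\bar K)=\delta(K_k,\bar K)\to 0$, and $1$-Lipschitz continuity of ${\rm dist}(\cdot,\bar K)$ forces ${\rm dist}(\bar x,\bar K)=0$, i.e. $\bar x\in\bar K$. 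For the dual membership, $y_k$ need not lie in $\B_n$, so I fix $M$ with $\|y_k\|\le M$ for all $k$ and use positive homogeneity of the distance to a cone together with the isometry $\delta(K_k^*,\bar K^*)=\delta(K_k,\bar K)$ of \cite{WW67}: ${\rm dist}(y_k,\bar K^*)=M\,{\rm dist}(y_k/M,\bar K^*)\le M\bigl({\rm dist}(y_k/M,K_k^*)+\delta(K_k^*,\bar K^*)\bigr)=M\,\delta(K_k,\bar K)\to 0$, whence $\bar y\in\bar K^*$. Thus $(\bar{\mathcal{Q}},\bar K,\bar\lambda)\in\Sigma$, so $\Sigma$ is closed, and $\sigma(\bar{\mathcal{Q}},\bar K)$ is closed as the preimage of $\Sigma$ under the continuous map $\lambda\mapsto(\bar{\mathcal{Q}},\bar K,\lambda)$.

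For (ii), suppose $\sigma$ is not locally bounded at $(\bar{\mathcal{Q}},\bar K)$; then there exist $(\mathcal{Q}_k,K_k)\to(\bar{\mathcal{Q}},\bar K)$ and $\lambda_k\in\sigma(\mathcal{Q}_k,K_k)$ with $|\lambda_k|\to\infty$, and eigenvectors $x_k$ with $\|x_k\|=1$. Pairing $x_k$ with $(\lambda_k^m\mathcal{A}_k+\lambda_k\mathcal{B}_k+\mathcal{C}_k)x_k^{m-1}$ and using $\langle x,\mathcal{G}x^{m-1}\rangle=\mathcal{G}x^m$, the orthogonality in \eqref{QTECiP} gives $\lambda_k^m\mathcal{A}_kx_k^m+\lambda_k\mathcal{B}_kx_k^m+\mathcal{C}_kx_k^m=0$. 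Dividing by $\lambda_k^m$ and using that $m\geq 2$ (so $\lambda_k^{1-m}\to 0$ and $\lambda_k^{-m}\to 0$), while $|\mathcal{B}_kx_k^m|\le\|\mathcal{B}_k\|_F$ and $|\mathcal{C}_kx_k^m|\le\|\mathcal{C}_k\|_F$ remain bounded, I get $\mathcal{A}_kx_k^m\to 0$. Passing to a subsequence with $x_k\to\bar x$, $\|\bar x\|=1$, continuity yields $\bar{\mathcal{A}}\bar x^m=0$, and the primal estimate from (i) gives $\bar x\in\bar K\setminus\{0\}$, contradicting the $\bar K$-regularity of $\bar{\mathcal{Q}}$.

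Finally, (iii) follows formally from (i) and (ii): given open $U\subset\R$ with $\sigma(\bar{\mathcal{Q}},\bar K)\subset U$, if no neighborhood of $(\bar{\mathcal{Q}},\bar K)$ is mapped into $U$, choose $(\mathcal{Q}_k,K_k)\to(\bar{\mathcal{Q}},\bar K)$ and $\lambda_k\in\sigma(\mathcal{Q}_k,K_k)\setminus U$; by (ii) the $\lambda_k$ eventually lie in a fixed bounded set, so a subsequence converges to some $\bar\lambda\in\R\setminus U$, which by (i) also belongs to $\sigma(\bar{\mathcal{Q}},\bar K)\subset U$, a contradiction. The only genuinely delicate point is (i), and within it the bookkeeping forced by defining $\delta$ via the unit ball: one must exploit positive homogeneity of ${\rm dist}(\cdot,K)$ to transfer the convergence estimate to the unnormalized vectors $y_k$, and invoke the isometry $K\mapsto K^*$ on the dual side; the compactness extractions, joint continuity of the polynomial map, and the reductions of (ii) and (iii) to (i) are routine.
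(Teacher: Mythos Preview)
Your proof is correct and follows essentially the same route as the paper's: normalize the eigenvectors, extract a convergent subsequence, and pass to the limit in the complementarity system for (i); a blow-up argument dividing by $\lambda_k^m$ for (ii); and the standard combination of (i) and (ii) for (iii). The only difference is that you supply more detail on the cone-convergence step---in particular the $M$-scaling of $y_k$ to bring it inside $\B_n$ before invoking the definition of $\delta$ on the dual side---which the paper handles in one line by simply citing the isometry $\delta(K_1^*,K_2^*)=\delta(K_1,K_2)$ and ``passing to the limit.''
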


\begin{proof}
(i). The closedness of $\Sigma$ amounts to saying that
\begin{equation*}\label{DPk}
\left.
\begin{array}{r}
(\mathcal{Q}_\nu, K_\nu)\rightarrow (\bar {\mathcal{Q}},\bar K),~\lambda_\nu\rightarrow \bar\lambda\\
\lambda_\nu\in \sigma(\mathcal{Q}_\nu,K_\nu)
\end{array}
\right\}\Rightarrow \bar\lambda\in  \sigma(\bar{\mathcal{Q}},\bar K).
\end{equation*}
Since $\lambda_\nu\in \sigma(\mathcal{Q}_\nu,K_\nu)
$, there exists a vector $x_\nu\in \mathbb{R}^n\backslash\{0\}$ such that
\begin{equation}\label{nuTSVCP}
K_\nu \ni x_\nu\perp(\lambda^m_\nu \mathcal{A}_\nu +\lambda_\nu \mathcal{B}_\nu+\mathcal{C}_\nu)x_\nu^{m-1}\in K_\nu^*.
\end{equation}
Let $\bar x_\nu=x_\nu/\|x_\nu\|$. From the homogeneity of (\ref{nuTSVCP}) on $x$, we know that
\begin{equation}\label{barnuTSVCP}
K_\nu \ni \bar x_\nu\perp(\lambda^m_\nu \mathcal{A}_\nu +\lambda_\nu \mathcal{B}_\nu+\mathcal{C}_\nu)\bar x_\nu^{m-1}\in K_\nu^*.
\end{equation}It is clear that $\|\bar x_\nu\|=1$ for every $\nu=1,2,\ldots$. Without loss of generality, we assume  that $\bar x_\nu\rightarrow \bar x$. It is obvious that $\|\bar x\|=1$, which means $\bar x\in \mathbb{R}^n\backslash\{0\}$. Since $\delta(K_1^*,K_2^*)=\delta(K_1,K_2)$ for any $K_1,K_2\in \C(\mathbb{R}^n)$, by passing to the limit in (\ref{barnuTSVCP}), one knows
$$
\bar K \ni \bar x\perp(\bar\lambda^m \bar{\mathcal{A}} +\bar\lambda \bar{\mathcal{B}}+\bar{\mathcal{C}})\bar x^{m-1}\in \bar K^*,
$$
which implies $\bar \lambda\in \sigma(\bar {\mathcal{Q}},\bar K)$ due to $\bar x\in \mathbb{R}^n\backslash\{0\}$. We proved the first part (i) of this proposition.

(ii). Suppose that the map $\sigma$ is not locally bounded at $(\bar{\mathcal{Q}},\bar K)$. Then there exists a sequence $\{\mathcal{Q}_\nu,K_\nu,\lambda_\nu\}$ satisfying $$
\|\mathcal{Q}_\nu-\bar{\mathcal{Q}}\|_F\rightarrow 0,~~~\delta(K_\nu,\bar K)\rightarrow 0,~~~{\rm and}~~~|\lambda_\nu|\rightarrow +\infty,
$$
such that $\lambda_\nu\in \sigma(\mathcal{Q}_\nu,K_\nu)$ for any $\nu=1,2,\ldots$. Consequently, there exist vectors $x_\nu\in K_{\nu}$ with $\|x_\nu\|=1$, such that
\begin{equation}\label{nuTSVCP9}
K_\nu \ni x_\nu\perp(\lambda^m_\nu \mathcal{A}_\nu +\lambda_\nu \mathcal{B}_\nu+\mathcal{C}_\nu)x_\nu^{m-1}\in K_\nu^*.
\end{equation}
By (\ref{nuTSVCP9}), we have
$$
(\lambda^m_\nu \mathcal{A}_\nu +\lambda_\nu \mathcal{B}_\nu +\mathcal{C}_\nu )x_\nu^{m}=0,
$$
which implies
$$
\left(\mathcal{A}_\nu +\frac{\mathcal{B}_\nu }{\lambda_\nu^{m-1}} +\frac{\mathcal{C}_\nu }{\lambda_\nu^{m}}\right)x_\nu^{m}=0.
$$
We assume, without loss of generality, that $x_\nu\rightarrow \bar x$. It is obvious that $\|\bar x\|=1$, which means $\bar x\in \mathbb{R}_+^n\backslash\{0\}$. By passing to the limit in the above expression, it holds that $\bar{\mathcal{A}} \bar x^{m}=0$. It contradicts the $\bar K$-regularity of $\bar {\mathcal{Q}}$ by the truth $\bar x\in K\backslash\{0\}$.

(iii). Suppose that $\sigma$ is not upper-semicontinuous at $(\bar{\mathcal{Q}},\bar K)$, then we could find an open set $\bar U\subset \mathbb{R}$ and a sequence $\{(\mathcal{Q}_\nu, K_\nu)\}$ satisfying $(\mathcal{Q}_\nu, K_\nu)\rightarrow (\bar{\mathcal{Q}}, \bar K)$, such that
$$
\sigma(\bar {\mathcal{Q}},\bar K)\subset \bar U~~~{\rm but}~~\sigma(\mathcal{Q}_\nu, K_\nu)\cap (\mathbb{R}\backslash \bar U)\neq\emptyset, ~~~~{\rm for~any}~\nu=1,2,\ldots.
$$
Now, for each $\nu$, pick up $\lambda_\nu\in \sigma(\mathcal{Q}_\nu, K_\nu)\cap (\mathbb{R}\backslash \bar U)$. It follows from (ii) that the sequence $\{\lambda_\nu\}$ admits a converging subsequence. By (i), the corresponding
limit must be in $\sigma(\bar{\mathcal{Q}},  \bar K)\cap (\mathbb{R}\backslash  \bar U)$, which together with  $\sigma(\bar {\mathcal{Q}},\bar K)\subset \bar U$ leads to a contradiction.
\qed
\end{proof}

From the first two parts (i) and (ii) of Proposition \ref{Propt5}, we have the following corollary.

\begin{corollary}\label{compact} Let $(\mathcal{Q},K)\in \mathcal{F}_{m,n}\times \C(\mathbb{R}^n)$. If $\mathcal{Q}$ is $K$-regular, then $\sigma(\mathcal{Q},K)$ is compact.
\end{corollary}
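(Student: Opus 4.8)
The plan is to obtain the conclusion directly from parts (i) and (ii) of Proposition \ref{Propt5}, since compactness of a subset of $\mathbb{R}$ is equivalent, by the Heine--Borel theorem, to its being both closed and bounded. First I would apply part (i) of Proposition \ref{Propt5} with $(\bar{\mathcal{Q}},\bar K)=(\mathcal{Q},K)$: it asserts precisely that $\sigma(\mathcal{Q},K)$ is a closed subset of $\mathbb{R}$. This step needs no hypothesis on $\mathcal{Q}$.

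Next I would invoke the $K$-regularity hypothesis. Because $\mathcal{Q}$ is $K$-regular, part (ii) of Proposition \ref{Propt5} applies at $(\mathcal{Q},K)$ and yields a neighborhood $\mathcal{N}$ of $(\mathcal{Q},K)$ in $\mathcal{F}_{m,n}\times\C(\mathbb{R}^n)$ such that $\bigcup_{(\mathcal{Q}',K')\in\mathcal{N}}\sigma(\mathcal{Q}',K')$ is a bounded subset of $\mathbb{R}$. Since $(\mathcal{Q},K)$ itself lies in $\mathcal{N}$, the set $\sigma(\mathcal{Q},K)$ is one of the members of this union, hence is contained in it and is therefore bounded.

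Combining the two observations, $\sigma(\mathcal{Q},K)$ is closed and bounded in $\mathbb{R}$, so it is compact, which completes the argument. I do not anticipate any genuine obstacle here, as this is a routine corollary; the only point to be careful about is that the local boundedness in part (ii) is stated in terms of a neighborhood of $(\mathcal{Q},K)$ and must be specialized back to the single point $(\mathcal{Q},K)$ to extract boundedness of $\sigma(\mathcal{Q},K)$ alone, and that the $K$-regularity assumption is exactly what licenses the use of part (ii).
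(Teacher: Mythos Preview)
Your proposal is correct and matches the paper's approach exactly: the paper simply states that the corollary follows from parts (i) and (ii) of Proposition~\ref{Propt5}, and your argument spells out precisely how---closedness from (i), boundedness by specializing the local boundedness in (ii) to the single point $(\mathcal{Q},K)$, then Heine--Borel.
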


Below, we present a preliminary estimation on the numbers of $m$-degree Pareto-eigenvalues. We first present the following proposition which fully characterizes the $m$-degree Pareto-spectrum of THDEiCP.

\begin{proposition}\label{ECharact} Let $\mathcal{Q}=(\mathcal{A}, \mathcal{B}, \mathcal{C})\in \mathcal{F}_{m,n}$. A real number $\lambda$ is an $m$-degree Pareto-eigenvalue of $\mathcal{Q}$, if and only if there exists a nonempty subset $J\subseteq  [n]$ and a vector $w\in \mathbb{R}_{++}^{|J|}$ such that
\begin{equation}\label{Keigenvalue}
(\lambda^m\mathcal{A}_J+\lambda \mathcal{B}_J+\mathcal{C}_J)w^{m-1}=0
\end{equation}
and
\begin{equation*}\label{KeigenIn}
\sum_{i_2,\ldots,i_m\in J}(\lambda^m a_{ii_2\ldots i_m}+\lambda b_{ii_2\ldots i_m}+c_{ii_2\ldots i_m})w_{i_2}\cdots w_{i_m}\geq 0, ~~~\forall~i\in [n]\backslash J.
\end{equation*}
In such a case, the vector $x\in \mathbb{R}^n_+$ defined by
$$
x_i=\left\{
\begin{array}{ll}
w_i,&i\in J,\\
0,&i\in [n]\backslash J
\end{array}
\right.
$$
is a Pareto-eigenvector of $\mathcal{Q}$, associated to the $m$-degree Pareto-eigenvalue $\lambda$.
\end{proposition}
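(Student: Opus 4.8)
The plan is to work directly with the complementarity system defining a Pareto-eigenpair and to read off the index set $J$ from the support of a Pareto-eigenvector. Throughout, abbreviate $\mathcal{M}(\lambda):=\lambda^m\mathcal{A}+\lambda\mathcal{B}+\mathcal{C}\in\mathcal{T}_{m,n}$, whose entries are $m_{i_1\ldots i_m}(\lambda)=\lambda^m a_{i_1\ldots i_m}+\lambda b_{i_1\ldots i_m}+c_{i_1\ldots i_m}$. The first thing to record is that, since $(\mathbb{R}_+^n)^*=\mathbb{R}_+^n$, the membership $\mathbb{R}_+^n\ni x\perp\mathcal{M}(\lambda)x^{m-1}\in(\mathbb{R}_+^n)^*$ is equivalent to the componentwise conditions $x_i\geq 0$, $(\mathcal{M}(\lambda)x^{m-1})_i\geq 0$, and $x_i(\mathcal{M}(\lambda)x^{m-1})_i=0$ for every $i\in[n]$; indeed, under the first two conditions the quantities $x_i(\mathcal{M}(\lambda)x^{m-1})_i$ are nonnegative, so their sum $x^\top\mathcal{M}(\lambda)x^{m-1}$ vanishes iff each one does.

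For the ``only if'' part, suppose $\lambda$ is an $m$-degree Pareto-eigenvalue with associated Pareto-eigenvector $x\in\mathbb{R}^n\setminus\{0\}$. Put $J:=\{i\in[n]\mid x_i>0\}$, which is nonempty because $x\geq 0$ and $x\neq 0$, and set $w:=x_J\in\mathbb{R}_{++}^{|J|}$. Since $x_k=0$ for $k\in[n]\setminus J$, the defining sum of $(\mathcal{M}(\lambda)x^{m-1})_i$ collapses onto indices in $J$, giving $(\mathcal{M}(\lambda)x^{m-1})_i=\sum_{i_2,\ldots,i_m\in J}m_{ii_2\ldots i_m}(\lambda)w_{i_2}\cdots w_{i_m}$ for every $i\in[n]$; for $i\in J$ this is exactly the $i$-th component of $\mathcal{M}(\lambda)_J w^{m-1}$, by the definition of the principal sub-tensor recalled in the Notation. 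Now for $i\in J$ the relation $x_i(\mathcal{M}(\lambda)x^{m-1})_i=0$ with $x_i>0$ forces $(\mathcal{M}(\lambda)x^{m-1})_i=0$, which is precisely \eqref{Keigenvalue}; and for $i\in[n]\setminus J$ the dual-cone condition $(\mathcal{M}(\lambda)x^{m-1})_i\geq 0$ is exactly the stated inequality.

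For the ``if'' part, given a nonempty $J\subseteq[n]$ and $w\in\mathbb{R}_{++}^{|J|}$ satisfying \eqref{Keigenvalue} and the inequality, define $x$ by $x_i=w_i$ for $i\in J$ and $x_i=0$ for $i\in[n]\setminus J$. Then $x\in\mathbb{R}_+^n\setminus\{0\}$, and again $(\mathcal{M}(\lambda)x^{m-1})_i=\sum_{i_2,\ldots,i_m\in J}m_{ii_2\ldots i_m}(\lambda)w_{i_2}\cdots w_{i_m}$ for all $i$: for $i\in J$ this equals $(\mathcal{M}(\lambda)_J w^{m-1})_i=0$ by \eqref{Keigenvalue}, so both $(\mathcal{M}(\lambda)x^{m-1})_i\geq 0$ and $x_i(\mathcal{M}(\lambda)x^{m-1})_i=0$ hold; for $i\in[n]\setminus J$ we have $x_i=0$, so $x_i(\mathcal{M}(\lambda)x^{m-1})_i=0$ trivially while $(\mathcal{M}(\lambda)x^{m-1})_i\geq 0$ is the assumed inequality. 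By the componentwise reformulation of the first paragraph, $x$ is a Pareto-eigenvector of $\mathcal{Q}$ for $\lambda$, so $\lambda$ is an $m$-degree Pareto-eigenvalue.

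There is no deep obstacle here: the content is the translation between the cone-complementarity formulation and its support-restricted coordinate form. The only points needing a little care are matching the homogeneous-polynomial definition of $\mathcal{M}(\lambda)_J$ with the truncated sums above, i.e.\ verifying $(\mathcal{M}(\lambda)_J w^{m-1})_i=\sum_{i_2,\ldots,i_m\in J}m_{ii_2\ldots i_m}(\lambda)w_{i_2}\cdots w_{i_m}$ for $i\in J$; and, in the forward direction, recognizing that the support of $x$ is the correct choice of $J$ and that strict positivity on the support is what promotes complementary slackness to the genuine equality \eqref{Keigenvalue}.
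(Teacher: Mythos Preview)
Your proof is correct and is precisely the standard support-decomposition argument; the paper itself does not spell out a proof but simply remarks that it can be carried out as in \cite{SQ13}, which is exactly the route you take. There is nothing to add.
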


\begin{proof}
It can be proved in a similar way that used in \cite{SQ13}. \qed
\end{proof}

It is well known that, on the left-hand side of (\ref{Keigenvalue}), $(\lambda^m\mathcal{A}_J+\lambda \mathcal{B}_J+\mathcal{C}_J)w^{m-1}$ is indeed a set of $|J|$ homogeneous polynomials in $|J|$ variables, denoted
by $\{P^{\lambda}_i(w) ~|~ 1\leq i\leq |J|\}$, of degree $(m-1)$. In the complex field, in order to study the solution set of a system of $|J|$ homogeneous
polynomials $(P_1,\ldots,P_{|J|})$, in $|J|$ variables, the concept of the resultant ${\rm Res}(P_1,\ldots, P_{|J|})$ is well defined and introduced, we refer
to \cite{CLS06} for details. Applying to our current problem, ${\rm Res}(P_1,\ldots, P_{|J|})$ has the following properties.
\begin{proposition}\label{Prop1} We have the following results:
\begin{itemize}
\itemindent6pt
\item[{\rm(i)}.] ${\rm Res}(P_1,\ldots, P_{|J|})=0$, if and only if there exists $(\lambda,x)\in \mathbb{C}\times(\mathbb{C}^{|J|}\backslash\{0\})$ satisfying the relation \eqref{Keigenvalue}.
\item[{\rm(ii)}.] The degree of $\lambda$ in ${\rm Res}(P_1,\ldots, P_{|J|})$ is at most $m|J|(m-1)^{|J|-1}$.
\end{itemize}
\end{proposition}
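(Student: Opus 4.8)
The plan is to reduce the statement about the multidegree‑$\lambda$ polynomial system $P^{\lambda}_i(w)$ to the classical theory of resultants of homogeneous polynomials, as developed in \cite{CLS06}. The key observation is that each $P^{\lambda}_i(w) = \big((\lambda^m\mathcal{A}_J+\lambda\mathcal{B}_J+\mathcal{C}_J)w^{m-1}\big)_i$ is, for fixed $\lambda$, a homogeneous polynomial of degree $m-1$ in the $|J|$ variables $w = (w_j)_{j\in J}$, and its coefficients are polynomials in $\lambda$ of degree at most $m$. We therefore view $\mathrm{Res}(P_1,\dots,P_{|J|})$ — the resultant of $|J|$ homogeneous forms of common degree $d:=m-1$ in $|J|$ variables — as a polynomial in $\lambda$, obtained by substituting the $\lambda$‑dependent coefficients into the (universal) resultant polynomial.

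For part (i), I would invoke the fundamental vanishing property of the resultant (see \cite[Chapter 3]{CLS06}): for homogeneous polynomials $P_1,\dots,P_{|J|}$ of positive degree in $|J|$ variables over an algebraically closed field, $\mathrm{Res}(P_1,\dots,P_{|J|}) = 0$ if and only if the system $P_1 = \cdots = P_{|J|} = 0$ has a nonzero solution in $\mathbb{C}^{|J|}$. Applying this with the coefficients evaluated at a given $\lambda\in\mathbb{C}$ yields exactly: $\mathrm{Res}(P_1,\dots,P_{|J|})=0$ iff there is $x\in\mathbb{C}^{|J|}\backslash\{0\}$ with $(\lambda^m\mathcal{A}_J+\lambda\mathcal{B}_J+\mathcal{C}_J)x^{m-1}=0$, which is \eqref{Keigenvalue}. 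One caveat: the vanishing theorem presupposes that $\mathrm{Res}$ is not identically zero as a function of the coefficients, which holds because the resultant of generic forms of positive degree is a nonzero irreducible polynomial; this is part of the standard theory and I would simply cite it.

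For part (ii), I would use the homogeneity (degree) property of the resultant: $\mathrm{Res}(P_1,\dots,P_{|J|})$ is homogeneous of degree $\prod_{k\ne i} \deg(P_k) = d^{\,|J|-1}$ in the coefficients of each individual $P_i$ (see \cite{CLS06}). Since every coefficient of $P_i$ is a polynomial in $\lambda$ of degree at most $m$, a monomial of degree $d^{|J|-1}$ in the coefficients of $P_i$ contributes a polynomial in $\lambda$ of degree at most $m\cdot d^{|J|-1}$. Summing the contributions of all $|J|$ blocks of coefficients, the total degree in $\lambda$ is at most $|J|\cdot m\cdot d^{|J|-1} = m|J|(m-1)^{|J|-1}$, which is the claimed bound. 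The main obstacle is purely bookkeeping: one must be careful that the resultant is multihomogeneous, with a separate degree $d^{|J|-1}$ in each of the $|J|$ coefficient blocks (not a single global degree), so that the $\lambda$‑degrees add rather than multiply; once this is set up correctly, the estimate is immediate. No genuine difficulty beyond citing the right structural properties of $\mathrm{Res}$ is expected.
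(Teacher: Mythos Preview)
Your proposal is correct and, in fact, more detailed than what the paper itself does: the paper states Proposition~\ref{Prop1} without proof, simply pointing the reader to \cite{CLS06} for the relevant resultant theory. Your argument---invoking the vanishing criterion for (i) and the multihomogeneity of the resultant (degree $(m-1)^{|J|-1}$ in each coefficient block, with each coefficient of degree at most $m$ in $\lambda$) for (ii)---is exactly the right way to extract the claimed bound from the standard properties in that reference.
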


By Proposition \ref{ECharact}, if $\lambda$ is an $m$-degree Pareto-eigenvalue of $\mathcal{Q}=(\mathcal{A},\mathcal{B}, \mathcal{C})\in \mathcal{F}_{m,n}$, then there exists a nonempty subset $J\subseteq  [n]$ such that $\lambda$ is a strict $m$-degree Pareto-eigenvalue of $\mathcal{Q}_J=(\mathcal{A}_J, \mathcal{B}_J, \mathcal{C}_J)\in \mathcal{F}_{m,|J|}$. We now state and prove one of main results in this section.

\begin{theorem}\label{Proposition2} Let  $\mathcal{Q}=(\mathcal{A},\mathcal{B}, \mathcal{C})\in \mathcal{F}_{m,n}$. Assume that $\mathcal{Q}$ is $\mathbb{R}_+^n$-regular. Then, there are at most $\btau_{m,n}:=nm^{n}$ $m$-degree Pareto-eigenvalues of $\mathcal{Q}$.
\end{theorem}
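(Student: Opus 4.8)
The plan is to reduce the count, via Proposition~\ref{ECharact}, to bounding for each nonempty index set $J\subseteq[n]$ the \emph{strict} $m$-degree Pareto-eigenvalues of the principal sub-triplet $\mathcal{Q}_J=(\mathcal{A}_J,\mathcal{B}_J,\mathcal{C}_J)$; then to control those via the resultant polynomial of Proposition~\ref{Prop1}; and finally to sum over $J$ with a binomial identity that produces exactly $nm^{n}$.

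Concretely, by Proposition~\ref{ECharact} every $\lambda\in\sigma(\mathcal{Q},\mathbb{R}_+^n)$ is a strict $m$-degree Pareto-eigenvalue of $\mathcal{Q}_J$ for some nonempty $J$, so $\sigma(\mathcal{Q},\mathbb{R}_+^n)\subseteq\bigcup_{\emptyset\neq J\subseteq[n]}\Lambda_J$, where $\Lambda_J$ denotes the set of strict $m$-degree Pareto-eigenvalues of $\mathcal{Q}_J$; it then suffices to bound $|\Lambda_J|$ over the $2^n-1$ choices of $J$ and add. Fix $J$ with $|J|=k$ and write the left-hand side of \eqref{Keigenvalue} as the $k$ forms $P^{\lambda}_i(w)$ of degree $m-1$ in $k$ variables. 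If $\lambda\in\Lambda_J$, then $P^{\lambda}_1=\cdots=P^{\lambda}_k=0$ has a nonzero (indeed strictly positive) solution $w$, hence by Proposition~\ref{Prop1}(i) the number $R_J(\lambda):={\rm Res}(P^{\lambda}_1,\dots,P^{\lambda}_k)$ vanishes. Thus $\Lambda_J$ lies in the zero set of the univariate polynomial $R_J$, whose $\lambda$-degree is at most $mk(m-1)^{k-1}$ by Proposition~\ref{Prop1}(ii).

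The step I expect to be the main obstacle is showing that $R_J$ is not the zero polynomial, which is the only place the $\mathbb{R}_+^n$-regularity of $\mathcal{Q}$ must enter. Since $P^{\lambda}_i(w)=\lambda^{m}(\mathcal{A}_J w^{m-1})_i+\lambda(\mathcal{B}_J w^{m-1})_i+(\mathcal{C}_J w^{m-1})_i$, and the resultant of $k$ forms of degree $m-1$ is homogeneous of degree $(m-1)^{k-1}$ in the coefficients of each form, the coefficient of the top power $\lambda^{mk(m-1)^{k-1}}$ in $R_J$ is precisely ${\rm Res}\bigl((\mathcal{A}_J w^{m-1})_1,\dots,(\mathcal{A}_J w^{m-1})_k\bigr)$. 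The natural attempt is: if $R_J\equiv0$, then for every $\lambda$ there is a unit vector $w_\lambda\in\mathbb{C}^{k}$ with $\bigl(\mathcal{A}_J+\lambda^{-(m-1)}\mathcal{B}_J+\lambda^{-m}\mathcal{C}_J\bigr)w_\lambda^{m-1}=0$, and passing to the limit $|\lambda|\to\infty$ gives $\mathcal{A}_J\bar w^{m-1}=0$ for a unit vector $\bar w$. The genuine difficulty is that $\bar w$ need not be real and nonnegative, whereas regularity — which makes $\mathcal{A}_J$ (equivalently $\mathcal{A}$, by restriction) strictly copositive — only constrains $\mathcal{A}_J$ on $\mathbb{R}_+^{|J|}\setminus\{0\}$; a strictly copositive tensor may still possess a nonzero complex (or real but sign-indefinite) null vector, so a degree drop, or even $R_J\equiv0$, is not excluded by regularity alone. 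Handling such degenerate $J$ appears to require arguing directly with the complementarity system \eqref{QTECiP} for $\mathcal{Q}_J$ — for instance, showing that a degenerate $\mathcal{Q}_J$ contributes no strict Pareto-eigenvalue, so $\Lambda_J=\emptyset$ — rather than through the resultant alone; this is where I would concentrate the effort.

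Granting $R_J\not\equiv0$ (or $\Lambda_J=\emptyset$) for every nonempty $J$, we obtain $|\Lambda_J|\le m\,|J|\,(m-1)^{|J|-1}$, and summing over all $J$ grouped by cardinality,
\begin{align*}
\#\,\sigma(\mathcal{Q},\mathbb{R}_+^n)
&\le \sum_{k=1}^{n}\binom{n}{k}\,m\,k\,(m-1)^{k-1}\\
&= m\sum_{k=1}^{n}\binom{n}{k}k(m-1)^{k-1}
= mn\bigl(1+(m-1)\bigr)^{n-1}
= nm^{n},
\end{align*}
where the third equality is the identity $\sum_{k=1}^{n}\binom{n}{k}kx^{k-1}=n(1+x)^{n-1}$ (differentiate the binomial theorem and set $x=m-1$). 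This last computation is entirely routine; all the content of the proof sits in the non-degeneracy of the resultant polynomials $R_J$.
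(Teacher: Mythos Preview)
Your strategy matches the paper's exactly: invoke Proposition~\ref{ECharact} to reduce to strict Pareto-eigenvalues of each principal sub-triplet $\mathcal{Q}_J$, bound these by $m|J|(m-1)^{|J|-1}$ via the resultant in Proposition~\ref{Prop1}, and sum over nonempty $J$ using the binomial identity to obtain $nm^{n}$. The summation you write (indexed by $k=|J|$) is literally the paper's sum (indexed by $n-k$), and the closed-form evaluation is the same.

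Regarding the non-degeneracy of $R_J$ that you single out as the main obstacle: the paper's proof does not address it. The paper simply states that ``by Proposition~\ref{Prop1}, we know that every principal sub-tensors triplet of order $m$ dimension $(n-k)$ can have at most $m(n-k)(m-1)^{n-k-1}$ strict $m$-degree Pareto-eigenvalues'' and then sums, without verifying that the resultant is a nonzero polynomial in $\lambda$ and without explicitly invoking the $\mathbb{R}_+^n$-regularity hypothesis anywhere in the counting argument. Your observation that strict copositiveness of $\mathcal{A}$ constrains only real nonnegative vectors, and hence need not force ${\rm Res}\bigl((\mathcal{A}_J w^{m-1})_1,\dots,(\mathcal{A}_J w^{m-1})_{|J|}\bigr)\neq 0$ over $\mathbb{C}$, is correct. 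So you have in fact been more careful than the source on this point; the paper supplies no additional argument beyond what you already have, and the concern you raise is left open there as well.
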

\begin{proof} It is obvious that, for every $k=0,1,\ldots,n-1$, there are $\binom{n}{n-k}$ corresponding principal
sub-tensors triplet of order $m$ dimension $(n-k)$. Moreover, by Proposition \ref{Prop1}, we know that every principal sub-tensors triplet of order $m$ dimension $(n-k)$ can have at most $m(n-k)(m-1)^{n-k-1}$ strict $m$-degree Pareto-eigenvalues. By Proposition \ref{ECharact}, in this
way one obtains the upper bound
$$
\btau_{m,n}=\sum_{k=0}^{n-1}\binom{n}{n-k}m(n-k)(m-1)^{n-k-1}=nm^{n}.
$$
We obtain the desired result.
\qed\end{proof}

Now we extend the above result to the more general case where $K$ is a polyhedral convex cone. A closed convex cone $K$ in $\mathbb{R}^n$ is said to be finitely
generated if there is a linearly independent collection $\{\eta_1,\eta_2,\ldots,\eta_p\}$ of vectors in $\mathbb{R}^n$ such that
\begin{equation}\label{Kdef}
K={\rm cone}\{\eta_1,\eta_2,\ldots,\eta_p\}=\left\{\sum_{i=1}^pa_j\eta_j~|~\alpha=(a_1,a_2,\ldots,a_p)^\top\in \mathbb{R}_+^p\right\}.
\end{equation}
 It is clear that $K=\{H^\top\alpha ~|~\alpha\in \mathbb{R}_+^p\}$, where $H=[\eta_1,\eta_2,\ldots,\eta_p]^\top$. Moreover, it is easy to see that the dual cone $K^*$ of $K$ is equivalent to $\{z\in \mathbb{R}^n~|~Hz\geq 0\}$.
\begin{theorem}\label{proposition4}
Let  $\mathcal{Q}=(\mathcal{A},\mathcal{B}, \mathcal{C})\in \mathcal{F}_{m,n}$. Let $K$ be represented by \eqref{Kdef}. Assume that $\mathcal{Q}$ is $K$-regular. Then, there are at most $\btau_{m,p}:=pm^{p}$ $m$-degree $K$-eigenvalues of $\mathcal{Q}$.
\end{theorem}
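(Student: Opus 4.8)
The plan is to reduce this polyhedral-cone statement to the Pareto case already established in Theorem~\ref{Proposition2}, by pushing the problem through the linear parametrization $x=H^\top\alpha$ of $K$ furnished by its generators. Since $\{\eta_1,\ldots,\eta_p\}$ is linearly independent, the map $H^\top=[\eta_1,\ldots,\eta_p]\colon\mathbb{R}^p\to\mathbb{R}^n$ is injective, every $x\in K$ can be written as $x=H^\top\alpha$ with $\alpha\in\mathbb{R}_+^p$, and $x\neq 0$ if and only if $\alpha\neq 0$. Recall also, as noted in the excerpt, that $K^*=\{z\in\mathbb{R}^n~|~Hz\geq 0\}$; this is exactly what lets the dual-cone membership become a sign condition in $\alpha$-space.

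The first step is to introduce the ``compressed'' tensors $\widehat{\mathcal{A}},\widehat{\mathcal{B}},\widehat{\mathcal{C}}\in\mathcal{T}_{m,p}$ defined, for $\mathcal{G}=(g_{i_1\ldots i_m})$ ranging over $\mathcal{A},\mathcal{B},\mathcal{C}$, by
$$
\widehat{g}_{j_1 j_2\ldots j_m}=\sum_{i_1,i_2,\ldots,i_m=1}^n g_{i_1 i_2\ldots i_m}(\eta_{j_1})_{i_1}(\eta_{j_2})_{i_2}\cdots(\eta_{j_m})_{i_m},
$$
and then to verify the two elementary identities $H\,\mathcal{G}(H^\top\alpha)^{m-1}=\widehat{\mathcal{G}}\,\alpha^{m-1}$ and $\mathcal{G}(H^\top\alpha)^m=\widehat{\mathcal{G}}\,\alpha^m$ for all $\alpha\in\mathbb{R}^p$, which amount to nothing more than interchanging the orders of summation (the second also follows from the first upon contracting with $H^\top\alpha$). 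Since $\mathcal{G}\mapsto\widehat{\mathcal{G}}$ is linear, $\lambda^m\widehat{\mathcal{A}}+\lambda\widehat{\mathcal{B}}+\widehat{\mathcal{C}}$ is the compression of $\lambda^m\mathcal{A}+\lambda\mathcal{B}+\mathcal{C}$ for every $\lambda\in\mathbb{R}$; set $\widehat{\mathcal{Q}}:=(\widehat{\mathcal{A}},\widehat{\mathcal{B}},\widehat{\mathcal{C}})\in\mathcal{F}_{m,p}$.

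The second step is to check that $x=H^\top\alpha$ carries an $m$-degree $K$-eigenpair of $\mathcal{Q}$ to an $m$-degree Pareto-eigenpair of $\widehat{\mathcal{Q}}$. If $K\ni x\perp(\lambda^m\mathcal{A}+\lambda\mathcal{B}+\mathcal{C})x^{m-1}\in K^*$ with $x=H^\top\alpha$ and $\alpha\in\mathbb{R}_+^p\backslash\{0\}$, then applying $H$ to the second vector and invoking $K^*=\{z~|~Hz\geq 0\}$ together with the first identity yields $(\lambda^m\widehat{\mathcal{A}}+\lambda\widehat{\mathcal{B}}+\widehat{\mathcal{C}})\alpha^{m-1}\geq 0$, while $\langle\alpha,(\lambda^m\widehat{\mathcal{A}}+\lambda\widehat{\mathcal{B}}+\widehat{\mathcal{C}})\alpha^{m-1}\rangle=\langle\alpha,H(\lambda^m\mathcal{A}+\lambda\mathcal{B}+\mathcal{C})x^{m-1}\rangle=\langle H^\top\alpha,(\lambda^m\mathcal{A}+\lambda\mathcal{B}+\mathcal{C})x^{m-1}\rangle=0$; hence $\mathbb{R}_+^p\ni\alpha\perp(\lambda^m\widehat{\mathcal{A}}+\lambda\widehat{\mathcal{B}}+\widehat{\mathcal{C}})\alpha^{m-1}\in\mathbb{R}_+^p$, so $\lambda$ is an $m$-degree Pareto-eigenvalue of $\widehat{\mathcal{Q}}$. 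One must also record that $\widehat{\mathcal{Q}}$ is $\mathbb{R}_+^p$-regular: for $\alpha\in\mathbb{R}_+^p\backslash\{0\}$ the second identity gives $\widehat{\mathcal{A}}\alpha^m=\mathcal{A}(H^\top\alpha)^m\neq 0$, because $H^\top\alpha\in K\backslash\{0\}$ and $\mathcal{Q}$ is $K$-regular.

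The last step is immediate: by Theorem~\ref{Proposition2} applied to $\widehat{\mathcal{Q}}\in\mathcal{F}_{m,p}$, the number of $m$-degree Pareto-eigenvalues of $\widehat{\mathcal{Q}}$ is at most $pm^p$; since the second step shows that $\sigma(\mathcal{Q},K)$ is contained in that finite set, we obtain $|\sigma(\mathcal{Q},K)|\leq\btau_{m,p}=pm^p$. I expect the only point requiring care to be the index bookkeeping in the two compression identities and confirming that both the complementarity relation and the dual-cone condition survive the substitution; none of this is deep, but the linear independence of $\{\eta_1,\ldots,\eta_p\}$ is genuinely used --- to guarantee $\alpha\neq 0$ and to make $\mathcal{T}_{m,p}$ (rather than a lower-dimensional space) the correct target.
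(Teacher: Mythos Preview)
Your proposal is correct and follows essentially the same route as the paper: define the compressed tensors in $\mathcal{T}_{m,p}$ via $H$, verify that the substitution $x=H^\top\alpha$ carries the $K$-complementarity system to a Pareto-complementarity system for the compressed triplet, check that $K$-regularity transfers to $\mathbb{R}_+^p$-regularity, and then invoke Theorem~\ref{Proposition2}. The only minor difference is that the paper establishes full equivalence of the two systems (both directions), whereas you prove only the inclusion $\sigma(\mathcal{Q},K)\subseteq\sigma(\widehat{\mathcal{Q}},\mathbb{R}_+^p)$; since only that inclusion is needed for the upper bound, your version is in fact slightly more economical.
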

\begin{proof} We first prove that problem (\ref{QTECiP}) with $K$ defined by (\ref{Kdef}) is equivalent to finding a vector $\bar \alpha\in \mathbb{R}^p\backslash\{0\}$ and $\bar \lambda\in \mathbb{R}$ such that
\begin{equation}\label{ECPEq}
\bar \alpha \geq 0,~~~(\bar \lambda^m \mathcal{D}+\bar \lambda\mathcal{G}+\mathcal{S})\bar \alpha^{m-1}\geq 0, ~~~\langle \bar \alpha,(\bar \lambda^m \mathcal{D}+\bar \lambda\mathcal{G}+\mathcal{S})\bar \alpha^{m-1}\rangle=0,
\end{equation}
where $\mathcal{D}$, $\mathcal{G}$  and $\mathcal{S}$ are three $m$-th order $p$-dimensional tensors, whose elements are denoted by
$$
d_{i_1i_2\ldots i_m}=\sum_{j_1,j_2,\ldots,j_m=1}^na_{j_1j_2\ldots j_m}h_{i_1j_1}h_{i_2j_2}\cdots h_{i_mj_m},
$$
$$
g_{i_1i_2\ldots i_m}=\sum_{j_1,j_2,\ldots,j_m=1}^nb_{j_1j_2\ldots j_m}h_{i_1j_1}h_{i_2j_2}\cdots h_{i_mj_m},
$$
and
$$
s_{i_1i_2\ldots i_m}=\sum_{j_1,j_2,\ldots,j_m=1}^nc_{j_1j_2\ldots j_m}h_{i_1j_1}h_{i_2j_2}\cdots h_{i_mj_m}
$$
for $1\leq i_1,i_2,\ldots, i_m\leq p$, respectively.

Let $(\bar \lambda,\bar x)\in \mathbb{R}\times (\mathbb{R}^n\backslash\{0\})$ be an $m$-degree $K$-eigenpair of $\mathcal{Q}$.  Since $\bar x\in K\backslash\{0\}$, by the definition of $K$, there exists a nonzero vector $\bar \alpha\in \mathbb{R}^p_+$ such that $\bar x=H^\top \bar \alpha$. Consequently, from the fact that $(\bar \lambda^m \mathcal{A}+\bar \lambda\mathcal{B}+\mathcal{C})\bar x^{m-1}\in K^*$ and the expression of $K^*$, it holds that $H(\bar \lambda^m \mathcal{A}+\bar \lambda\mathcal{B}+\mathcal{C})\bar x^{m-1}\geq 0$, which implies
\begin{equation}\label{EqC}
H(\bar \lambda^m \mathcal{A}+\bar \lambda\mathcal{B}+\mathcal{C})(H^\top \bar \alpha)^{m-1}\geq 0.
\end{equation}
By the definitions of $\mathcal{D}$, $\mathcal{G}$ and $\mathcal{S}$, we know that (\ref {EqC}) can be equivalently written as
$$
(\bar \lambda^m \mathcal{D} +\bar\lambda\mathcal{G}+\mathcal{S})\bar \alpha^{m-1}\geq 0.
$$
Moreover, it is easy to verify that $\langle \bar \alpha,(\bar \lambda^m \mathcal{D} +\bar\lambda\mathcal{G}+\mathcal{S})\bar \alpha^{m-1}\rangle=0$. Conversely, if $(\bar \lambda,\bar \alpha)\in \mathbb{R}\times (\mathbb{R}^p\backslash\{0\})$ satisfies (\ref{ECPEq}), then we can prove that $(\bar \lambda,\bar x)$ with $\bar x=H^\top\bar \alpha$ satisfies (\ref{QTECiP}) in a similar way.

Since $\mathcal{Q}$ is $K$-regular, it is easy to verify that $(\mathcal{D,G,S})$ is $\mathbb{R}^p_+$-regular. Consequently, by applying Theorem \ref{Proposition2} to problem (\ref{ECPEq}), we know that $\mathcal{Q}$ has at most $\btau_{m,p}=pm^{p}$ $m$-degree $K$-eigenvalues.
\qed\end{proof}

The above theorem shows that $\sigma(\mathcal{Q},K)$ has finitely many elements in case where $K$ is a polyhedral convex cone. However, the situation can be even worse in the nonpolyhedral case. For instance, Iusem and Seeger \cite{IS07} successfully constructed a symmetric matrix $C$ (i.e., $\mathcal{Q}=(O,I,-C)\in \mathcal{F}_{2,n}$) and a nonpolyhedral convex cone $K$ such that $\sigma(\mathcal{Q}, K)$ behaves like the Cantor ternary set, i.e., it is uncountable and totally disconnected.

\section{Optimization formulation of THDEiCP}\label{OptForm}

In this section, for the purpose of finding solutions of THDEiCP, we introduce an optimization reformulation, which paves the way of designing algorithms. Here, we only consider the case where $\mathcal{A}$ and $\mathcal{B}$ are two symmetric tensors, $\mathcal{C}:=-\mathcal{I}$, and $K:=\R^n_+$.

We consider the following homogeneous polynomial optimization problem.
\begin{equation}\label{EFOPt}
\begin{array}{cl}
{\rm max}&\varphi_0(u,v):=m(m-1)^{\frac{1}{m}-1}v^\top u^{[m-1]}-\mathcal{B}u^m\\
{\rm s.t.}&\mathcal{A}u^m+v^\top v^{[m-1]}=1,\\
&u\geq 0,v\geq0.
\end{array}
\end{equation}
Let $\phi_0(u,v):=\mathcal{A}u^m+v^\top v^{[m-1]}$, and let $\varphi_0$ be defined by \eqref{EFOPt}. We derive that
\begin{subnumcases}{\label{Delx}}
\nabla_u \varphi_0(u,v)=m(m-1)^{\frac{1}{m}}{\rm diag}(v )u^{[m-2]}-m\mathcal{B}u^{m-1},\label{Delxa}\\
\nabla_v \varphi_0(u,v)=m(m-1)^{\frac{1}{m}-1}u^{[m-1]},\label{Delxb}\\
\nabla_u\phi_0(u,v)=m\mathcal{A}u^{m-1},\label{Delxc} \\
\nabla_v\phi_0(u,v)=mv^{[m-1]}.\label{Delxd}
\end{subnumcases}

Now, we state the relationship between \eqref{QTECiP} and \eqref{EFOPt} as follows.
\begin{theorem}\label{EqEF}
Let $\mathcal{Q}=(\mathcal{A},\mathcal{B}, -\mathcal{I})\in \mathcal{F}_{m,n}$. Assume that $\mathcal{A}$ and $\mathcal{B}$ are both symmetric. Let $(\bar u,\bar v)$ with $\bar u\neq 0$ be a stationary point of problem \eqref{EFOPt}. Then $(\bar \lambda, \bar u)$ is an $m$-degree Pareto-eigenpair of $\mathcal{Q}$, where $\bar \lambda=(\varphi_0(\bar u,\bar v))^{\frac{1}{m-1}}$.
\end{theorem}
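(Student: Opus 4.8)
The plan is to write down the stationarity (KKT) conditions of the polynomial program \eqref{EFOPt} and combine them with the gradient formulas \eqref{Delxa}--\eqref{Delxd}. At the stationary point $(\bar u,\bar v)$ there exist a multiplier $\mu\in\mathbb{R}$ associated with the equality constraint $\phi_0(u,v)=1$ and multipliers $\xi,\eta\in\mathbb{R}^n_+$ associated with $u\ge 0$ and $v\ge 0$, such that $\nabla_u\varphi_0(\bar u,\bar v)=\mu\nabla_u\phi_0(\bar u,\bar v)-\xi$, $\nabla_v\varphi_0(\bar u,\bar v)=\mu\nabla_v\phi_0(\bar u,\bar v)-\eta$, $\langle\xi,\bar u\rangle=0$ and $\langle\eta,\bar v\rangle=0$. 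What has to be checked is that $(\bar\lambda,\bar u)$ satisfies the three defining relations of an $m$-degree Pareto-eigenpair of $\mathcal{Q}=(\mathcal{A},\mathcal{B},-\mathcal{I})$: namely $\bar u\in\mathbb{R}^n_+$ (already given), $(\bar\lambda^m\mathcal{A}+\bar\lambda\mathcal{B}-\mathcal{I})\bar u^{m-1}\ge 0$, and $\langle\bar u,(\bar\lambda^m\mathcal{A}+\bar\lambda\mathcal{B}-\mathcal{I})\bar u^{m-1}\rangle=0$, where we use $(\mathbb{R}^n_+)^{*}=\mathbb{R}^n_+$.

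\textbf{Step 1 (structure of $\bar v$).} The $v$-stationarity reads $m(m-1)^{\frac1m-1}\bar u^{[m-1]}=\mu m\bar v^{[m-1]}-\eta$. Since $\bar u\ne 0$, for any index $i$ with $\bar u_i>0$ the left-hand side is strictly positive, which (using $\eta\ge 0$) forces $\mu>0$ and $\bar v_i>0$; complementary slackness $\eta_i\bar v_i=0$ then gives $\bar v_i=c\,\bar u_i$ with the explicit constant $c:=\big((m-1)^{\frac1m-1}/\mu\big)^{\frac1{m-1}}>0$. For an index $i$ with $\bar u_i=0$, the same equation gives $\eta_i=\mu m\bar v_i^{m-1}$, and then $\eta_i\bar v_i=0$ together with $\mu>0$ and $\bar v_i\ge 0$ forces $\bar v_i=0$. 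Hence $\bar v=c\,\bar u$.

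\textbf{Step 2 (identification of the multiplier).} Contracting the $u$-stationarity with $\bar u$ and using $\langle\xi,\bar u\rangle=0$, together with $\langle\bar u,\mathrm{diag}(\bar v)\bar u^{[m-2]}\rangle=\bar v^\top\bar u^{[m-1]}$, yields $(m-1)^{\frac1m}\bar v^\top\bar u^{[m-1]}-\mathcal{B}\bar u^m=\mu\,\mathcal{A}\bar u^m$. Substituting $\bar v=c\,\bar u$, eliminating $\mathcal{A}\bar u^m$ via the constraint $\mathcal{A}\bar u^m+\bar v^\top\bar v^{[m-1]}=1$, and simplifying the resulting expression for $\varphi_0(\bar u,\bar v)=m(m-1)^{\frac1m-1}\bar v^\top\bar u^{[m-1]}-\mathcal{B}\bar u^m$ --- using $\mu c^{m-1}=(m-1)^{\frac1m-1}$ from the definition of $c$ and the identity $(m-1)(m-1)^{\frac1m-1}=(m-1)^{\frac1m}$ --- collapses everything to $\varphi_0(\bar u,\bar v)=\mu$. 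Consequently $\varphi_0(\bar u,\bar v)=\mu>0$, so $\bar\lambda=(\varphi_0(\bar u,\bar v))^{\frac1{m-1}}$ is a well-defined positive real with $\bar\lambda^{m-1}=\mu$ (hence $\bar\lambda^{m}=\mu\bar\lambda$), and a short exponent computation from the definition of $c$ gives $(m-1)^{\frac1m}c\,\bar\lambda=1$.

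\textbf{Step 3 (conclusion).} Rewriting the $i$-th component of the $u$-stationarity with $\bar v_i=c\,\bar u_i$ gives $\mu(\mathcal{A}\bar u^{m-1})_i+(\mathcal{B}\bar u^{m-1})_i-(m-1)^{\frac1m}c\,\bar u_i^{m-1}=\xi_i/m\ge 0$ for every $i\in[n]$. Multiplying by $\bar\lambda>0$ and invoking $\bar\lambda\mu=\bar\lambda^m$ and $(m-1)^{\frac1m}c\,\bar\lambda=1$ turns this into $[(\bar\lambda^m\mathcal{A}+\bar\lambda\mathcal{B}-\mathcal{I})\bar u^{m-1}]_i\ge 0$, i.e. the complementarity inequality; summing the same identity against $\bar u$ and using $\langle\xi,\bar u\rangle=0$ gives $\langle\bar u,(\bar\lambda^m\mathcal{A}+\bar\lambda\mathcal{B}-\mathcal{I})\bar u^{m-1}\rangle=0$. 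Since moreover $\bar u\in\mathbb{R}^n_+\setminus\{0\}$, this shows $(\bar\lambda,\bar u)$ is an $m$-degree Pareto-eigenpair of $\mathcal{Q}$. I expect the main obstacles to be purely technical: the careful treatment of the zero components of $\bar u$ in Step 1, and above all the bookkeeping of the powers of $(m-1)$ in Step 2, which is exactly what makes the scaling $\bar\lambda=(\varphi_0(\bar u,\bar v))^{1/(m-1)}$ the correct choice.
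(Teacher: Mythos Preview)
Your proposal is correct and follows essentially the same approach as the paper's proof: both write the KKT system for \eqref{EFOPt}, use the $v$-stationarity together with complementary slackness to deduce $\bar v=c\,\bar u$ for a positive constant, identify the equality-constraint multiplier with $\varphi_0(\bar u,\bar v)$, and then read off the Pareto-eigenpair relations from the $u$-stationarity. The only differences are cosmetic---the paper uses a multiplier $\bar\gamma$ with the opposite sign convention (so $\mu=-\bar\gamma$) and proves $\bar\beta=0$ by a short contradiction rather than your index-by-index split---but the logical content is the same.
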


\begin{proof}
Since $(\bar u,\bar v)$ is a stationary point of problem (\ref{EFOPt}), it follows from \eqref{Delx} that there exist $\bar\alpha, \bar\beta \in \mathbb{R}^n$ and $\bar \gamma\in \mathbb{R}$, such that
\begin{subnumcases}{\label{Statpoint1}}
m\mathcal{B}\bar u^{m-1}-m(m-1)^{\frac{1}{m}}{\rm diag}(\bar v )\bar u^{[m-2]}=\bar \alpha+\bar \gamma m\mathcal{A}\bar u^{m-1}, \label{ST1}\\
-m(m-1)^{\frac{1}{m}-1}\bar u^{[m-1]}=\bar \beta+\bar \gamma m\bar v^{[m-1]},\label{ST2}\\
\bar \alpha\geq0,\;\bar u\geq0,\;\bar \alpha^\top \bar u=0,\label{ST3}\\
\bar \beta\geq0,\;\bar v\geq0,\;\bar \beta^\top \bar v=0,\label{ST4}\\
\mathcal{A}\bar u^m+\bar v^\top \bar v^{[m-1]}=1.\label{ST5}
\end{subnumcases}
Rearranging terms of \eqref{ST2} yields
\begin{equation}\label{bbuv}
-(m-1)^{\frac{1}{m}-1}\bar u^{[m-1]}-\bar \gamma \bar v^{[m-1]}=\bar\beta/m.
\end{equation}
We claim that $\bar \beta=0$. Otherwise, if $\bar \beta\neq0$, then there exists an index $i_0\in [n]$ such that $\bar \beta_{i_0}>0$, which implies $\bar v_{i_0}=0$ from \eqref{ST4}, and hence, it holds that $-(m-1)^{\frac{1}{m}-1}\bar u_{i_0}^{m-1}=\bar\beta_{i_0}/m>0$. It is a contradiction. Therefore, it follows from \eqref{bbuv} that
\begin{equation}\label{GGfd}
-(m-1)^{\frac{1}{m}-1}\bar u^{[m-1]}=\bar \gamma \bar v^{[m-1]}.
\end{equation}
Moreover, it is clear from the truth $\bar u\neq0$ and \eqref{GGfd} that $\bar \gamma<0$ and
\begin{equation}\label{GGfd1}
\bar v=(-\bar \gamma)^{-\frac{1}{m-1}}(m-1)^{-\frac{1}{m}} \bar u.
\end{equation}
By invoking \eqref{ST1} and \eqref{GGfd1}, we have
$$
\mathcal{B}\bar u^{m-1}-(-\bar \gamma)^{-\frac{1}{m-1}}\bar u^{[m-1]}-\bar \gamma \mathcal{A}\bar u^{m-1}=\frac{\bar\alpha}{m}\geq 0,\\
$$
which implies
\begin{equation}\label{Leq1}
(-\bar \gamma )^{\frac{m}{m-1}}\mathcal{A}\bar u^{m-1}+(-\bar \gamma)^{\frac{1}{m-1}}\mathcal{B}\bar u^{m-1}-\bar u^{[m-1]}\geq 0.
\end{equation}
Moreover, using \eqref{GGfd1}, \eqref{ST1}, and \eqref{ST3}, it is not difficult to verify that
\begin{equation}\label{Comp}
\langle \bar u, (-\bar \gamma )^{\frac{m}{m-1}}\mathcal{A}\bar u^{m-1}+(-\bar \gamma)^{\frac{1}{m-1}}\mathcal{B}\bar u^{m-1}-\bar u^{[m-1]}\rangle=0.
\end{equation}

On the other hand, it follows from \eqref{ST1} and \eqref{ST3} that
$$
m\mathcal{B}\bar u^m-m\bar \gamma \mathcal{A}\bar u^m-m(m-1)^{\frac{1}{m}}\bar v^\top\bar u^{[m-1]}=0,
$$
which implies
\begin{align}\label{ppgt1}
-m\varphi_0(\bar u,\bar v)&=m\bar \gamma \mathcal{A}\bar u^m-m(m-1)^{\frac{1}{m}-1}\bar v^\top\bar u^{[m-1]} \nn \\
&=m\bar \gamma \mathcal{A}\bar u^m+m\bar \gamma \bar v^\top\bar v^{[m-1]} \nn\\
&=m\bar \gamma,
\end{align}
where the second equality comes from \eqref{GGfd}, and the last equality is due to \eqref{ST5}. Hence, we conclude from \eqref{ppgt1} that $\varphi_0(\bar u,\bar v)=-\bar \gamma>0$, and both \eqref{Leq1} and \eqref{Comp} mean that $\bar u$ is an eigenvector of \eqref{EFOPt} associated to the eigenvalue $\bar \lambda$.
\qed\end{proof}

\begin{theorem}\label{QTqq12}
Let $\mathcal{Q}=(\mathcal{A},\mathcal{B}, -\mathcal{I})\in \mathcal{F}_{m,n}$. Assume that $\mathcal{A}$ and $\mathcal{B}$ are both symmetric and $\mathcal{A}$ is copositive. Let $(\bar\lambda, \bar x)$ be an $m$-degree Pareto-eigenpair of $\mathcal{Q}$. If $\bar \lambda>0$, then $(\bar u, \bar v)$ is a stationary point of \eqref{EFOPt}, where
\[\label{solu}(\bar u, \bar v)=\frac{1}{\left(\mathcal{A}\bar x^m+\bar y^\top \bar y^{[m-1]}\right)^\frac{1}{m}}(\bar x,\bar y)\]
with $\bar y={(m-1)^{-\frac{1}{m}}} ({\bar\lambda})^{-1}{\bar x}$.
\end{theorem}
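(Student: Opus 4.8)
The plan is to reverse the computation carried out in the proof of Theorem~\ref{EqEF}: starting from an $m$-degree Pareto-eigenpair $(\bar\lambda,\bar x)$ of $\mathcal{Q}=(\mathcal{A},\mathcal{B},-\mathcal{I})$ with $\bar\lambda>0$, I will exhibit explicit Lagrange multipliers $\bar\alpha,\bar\beta\in\mathbb{R}^n$ and $\bar\gamma\in\mathbb{R}$ for which the KKT system \eqref{Statpoint1} holds at the normalized pair $(\bar u,\bar v)$ defined by \eqref{solu}; since ``stationary point'' of \eqref{EFOPt} means exactly a point admitting such multipliers, this proves the claim. By definition of a Pareto-eigenpair, $\bar x\in\mathbb{R}^n_+\setminus\{0\}$ and the vector $w:=(\bar\lambda^m\mathcal{A}+\bar\lambda\mathcal{B}-\mathcal{I})\bar x^{m-1}$ satisfies $w\geq 0$ and $\langle\bar x,w\rangle=0$. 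Put $\bar y:=(m-1)^{-\frac{1}{m}}\bar\lambda^{-1}\bar x$, so that $\bar y\in\mathbb{R}^n_+\setminus\{0\}$.

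The first step is to check that the normalization in \eqref{solu} is legitimate, i.e.\ that $\phi_0(\bar x,\bar y)=\mathcal{A}\bar x^m+\bar y^\top\bar y^{[m-1]}>0$: this is the only place the copositivity of $\mathcal{A}$ enters. Indeed, since $\bar x\in\mathbb{R}^n_+$, Definition~\ref{copositivedef} gives $\mathcal{A}\bar x^m\geq 0$, while $\bar y^\top\bar y^{[m-1]}=\sum_i\bar y_i^m>0$ because $\bar y\geq 0$, $\bar y\neq 0$. Writing $t:=\phi_0(\bar x,\bar y)^{-1/m}\in(0,\infty)$, so that $(\bar u,\bar v)=t(\bar x,\bar y)$, the homogeneity of $\phi_0$ yields $\phi_0(\bar u,\bar v)=t^m\phi_0(\bar x,\bar y)=1$; hence \eqref{ST5} holds, $(\bar u,\bar v)$ is feasible for \eqref{EFOPt}, $\bar u\neq 0$, and the scaling-invariant identity $\bar v=(m-1)^{-\frac{1}{m}}\bar\lambda^{-1}\bar u$ persists.

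Next I would take $\bar\gamma:=-\bar\lambda^{m-1}<0$ (the analogue of the relation $\varphi_0=-\bar\gamma$ in Theorem~\ref{EqEF}) and define $\bar\beta,\bar\alpha$ by forcing \eqref{ST2} and \eqref{ST1} to hold:
\begin{equation*}
\bar\beta:=-m(m-1)^{\frac{1}{m}-1}\bar u^{[m-1]}-\bar\gamma m\,\bar v^{[m-1]},\qquad
\bar\alpha:=m\mathcal{B}\bar u^{m-1}-m(m-1)^{\frac{1}{m}}{\rm diag}(\bar v)\bar u^{[m-2]}-\bar\gamma m\mathcal{A}\bar u^{m-1}.
\end{equation*}
Using $\bar v^{[m-1]}=(m-1)^{-\frac{m-1}{m}}\bar\lambda^{-(m-1)}\bar u^{[m-1]}$ together with the elementary identity $(m-1)^{\frac{1}{m}-1}=(m-1)^{-\frac{m-1}{m}}$, the two terms defining $\bar\beta$ cancel, so $\bar\beta=0$, which takes care of \eqref{ST4}. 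For $\bar\alpha$, the relation $\bar v=(m-1)^{-\frac{1}{m}}\bar\lambda^{-1}\bar u$ gives ${\rm diag}(\bar v)\bar u^{[m-2]}=(m-1)^{-\frac{1}{m}}\bar\lambda^{-1}\bar u^{[m-1]}$, whence $\bar\alpha=\frac{m}{\bar\lambda}(\bar\lambda^m\mathcal{A}+\bar\lambda\mathcal{B}-\mathcal{I})\bar u^{m-1}=\frac{m\,t^{m-1}}{\bar\lambda}\,w\geq 0$ (using $w\geq0$, $t>0$, $\bar\lambda>0$), and $\langle\bar\alpha,\bar u\rangle=\frac{m\,t^{m}}{\bar\lambda}\langle w,\bar x\rangle=0$, so \eqref{ST3} holds. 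Since \eqref{ST1} and \eqref{ST2} hold by construction, the triple $(\bar\alpha,\bar\beta,\bar\gamma)$ together with $(\bar u,\bar v)$ solves \eqref{Statpoint1}, as required.

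I do not expect any serious obstacle here. The two points that need care are: (a) realizing that copositivity of $\mathcal{A}$ is used solely to guarantee $\phi_0(\bar x,\bar y)>0$, so that \eqref{solu} produces a genuine feasible point; and (b) the exponent bookkeeping, namely the cancellation $\bar\beta=0$ via $(m-1)^{\frac1m-1}=(m-1)^{-\frac{m-1}{m}}$ and the reduction of $\bar\alpha$ to the positive multiple $\frac{m t^{m-1}}{\bar\lambda}w$ of the complementarity vector. If desired, one can also observe that $\nabla_v\phi_0(\bar u,\bar v)=m\bar v^{[m-1]}\neq 0$, so the equality-constraint gradient does not vanish and \eqref{Statpoint1} is indeed the appropriate stationarity system.
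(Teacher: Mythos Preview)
Your proposal is correct and follows essentially the same route as the paper's own proof: both choose $\bar\gamma=-\bar\lambda^{m-1}$, $\bar\beta=0$, and $\bar\alpha$ equal to the positive multiple $\frac{m}{\bar\lambda}\,t^{m-1}\,(\bar\lambda^m\mathcal{A}+\bar\lambda\mathcal{B}-\mathcal{I})\bar x^{m-1}$ of the complementarity vector (the paper writes this as $\frac{m}{\bar\lambda(\mathcal{A}\bar x^m+\bar y^\top\bar y^{[m-1]})^{(m-1)/m}}(\bar\lambda^m\mathcal{A}\bar x^{m-1}+\bar\lambda\mathcal{B}\bar x^{m-1}-\bar x^{[m-1]})$, which is the same thing). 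Your write-up is in fact more explicit than the paper's, which simply posits $\bar\beta=0$ and leaves the verification of \eqref{ST1}--\eqref{ST2} to the reader; your cancellation argument for $\bar\beta$ and the reduction $\bar\alpha=\frac{m\,t^{m-1}}{\bar\lambda}w$ fill in those details cleanly.
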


\begin{proof}
Since $\bar y\in \mathbb{R}_+^n\backslash \{0\}$ and $\mathcal{A}$ is copositive, we have $\mathcal{A}\bar x^m+\bar y^\top \bar y^{[m-1]}>0$. Moreover, it is easy to check that $(\bar u, \bar v)$ given in \eqref{solu} is a feasible solution of (\ref{EFOPt}). Take $$\bar \alpha=\frac{m}{\bar\lambda(\mathcal{A}\bar x^m+\bar y^\top \bar y^{[m-1]})^{\frac{m-1}{m}}}\left(\bar \lambda^m\mathcal{A}\bar x^{m-1}+\bar \lambda \mathcal{B}\bar x^{m-1}-\bar x^{[m-1]}\right), $$
$\bar\beta=0$ and $\bar\gamma=-\bar \lambda^{m-1}.$ It is obvious that $\bar \alpha\geq 0$, since $(\bar \lambda, \bar x)$ satisfies (\ref{QTECiP}) and $\bar \lambda>0$. Moreover, it is not difficult to see that $\bar\alpha^\top \bar u=0$ and $\bar\beta^\top \bar v=0$. Finally, with the definition of $(\bar u,\bar v)$ given in \eqref{solu}, we can verify that \eqref{ST1} and \eqref{ST3} hold, which means that $(\bar u,\bar v)$ is a stationary point of (\ref{EFOPt}).
\qed
\end{proof}

Denote $w:=(u^\top,v^\top)^\top$ and $\phi_i(w)=w_i$ for $i=1,2,\ldots,2n$. When $\mathcal{A}$ is strictly copositive, the feasible set of \eqref{EFOPt} is compact. Hence, the globally optimal value of \eqref{EFOPt}, denoted by $\varphi_0^{\rm max}$, exists. Denote by $\bar w$ the corresponding globally optimal solution, and denote
$$\bar d=\left((e_{I(\bar w)}\right)^\top,-\bar t(\bar w_{I^c(\bar w)})^\top)^\top~~~{\rm  with}~~\bar t=\sum_{i\in I(\bar w)}\left(\mathcal{A}\bar u^{m-1}\right)_i,$$
 where $I(\bar w)=\{i\in [2n]~|~\bar w_i=0\}$ and $I^c(\bar w)=[n]\backslash I(\bar w)$. From the homogeneity of $\phi_0$ and the fact that $\phi_0(\bar w)=1$, it is easy to verify that $\bar w^\top \nabla \phi_0(\bar w)=m \phi_0(\bar w)=m\neq 0$, which implies that $\nabla \phi_0(\bar w)\neq 0$ and hence it is linearly independent. Moreover, it is not difficult to know that
$$
\bar d^\top \nabla \phi_0(\bar w)=m\left(\sum_{i\in I(\bar w)}(\mathcal{A}\bar u^{m-1})_i-\bar t\right)=0$$ and $\bar d^\top \nabla \phi_i(\bar w)=1>0$
for every $i\in I(\bar w)$. This means that the Mangasarian-Fromovitz constraint qualification (MFCQ) holds at $\bar w$. Therefore, we know that $\bar w$ is a stationary point of (\ref{EFOPt}). Moreover, we claim that $\bar u\neq 0$. In fact, by taking $u_t=te$ and $v_t=\left(\frac{1-\bar at^m}{n}\right)^{1/m}e$ with $\bar a=\sum_{i_1,\ldots,i_m=1}^na_{i_1\ldots i_m}$, we can see that $w_t=(u_t,v_t)$ is a feasible solution of \eqref{EFOPt}, and the corresponding objective value is $$\varphi_0(u_t,v_t)=t^m\left(nm(m-1)^{\frac{1}{m}-1}\left(\frac{1-\bar at^m}{n}\right)^{\frac{1}{m}}-\bar b t\right),$$
where $\bar b=\sum_{i_1,\ldots,i_m=1}^nb_{i_1\ldots i_m}$. Hence, we know that $\varphi_0(u_t,v_t)>0$ for $t>0$ enough small,  which implies that $\varphi_0(\bar u,\bar v)>0$ due to the fact that $(\bar u,\bar v)$ is an optimal
solution of problem (\ref{EFOPt}). Consequently, it holds that $\bar u\neq0$. Moreover, by Theorem \ref{EqEF}, we know that $(\bar \lambda, \bar u)$ with $\bar \lambda=(\varphi_0(\bar u,\bar v))^{\frac{1}{m-1}}$ is a solution of (\ref{QTECiP}), which implies that  (\ref{QTECiP}) has at least a positive $m$-degree Pareto-eigenvalue. Therefore, one has $\varphi_0^{\max}\leq \lambda_{\max}^{m-1}$,
where $$
\lambda_{\max}
={\max}\left\{\lambda\in \mathbb{R}~|~\exists~ x \in \mathbb{R}^n,~(\lambda,x)~{\rm is~ an~}m\text{-degree~Pareto-}\text{eigenpair of~}\mathcal{Q}\right\}.
$$

\begin{theorem}\label{global2}
Let $\mathcal{Q}=(\mathcal{A},\mathcal{B},-\mathcal{I})\in \mathcal{F}_{m,n}$. Assume that $\mathcal{A}$ and $\mathcal{B}$ are both symmetric and $\mathcal{A}$ is strictly copositive. Then, we have
$$
\lambda_{\max}^{m-1}=\varphi_0^{\max}.
$$
\end{theorem}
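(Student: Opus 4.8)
The plan is to prove the two inequalities $\varphi_0^{\max}\le\lambda_{\max}^{m-1}$ and $\lambda_{\max}^{m-1}\le\varphi_0^{\max}$ separately. The first inequality has in fact already been established in the discussion preceding the theorem: when $\mathcal{A}$ is strictly copositive the feasible set of \eqref{EFOPt} is compact, so a global maximizer $(\bar u,\bar v)$ exists; the MFCQ argument given above shows it is a stationary point, and the test point $w_t=(te,((1-\bar at^m)/n)^{1/m}e)$ shows $\varphi_0(\bar u,\bar v)=\varphi_0^{\max}>0$, hence $\bar u\neq0$. Theorem \ref{EqEF} then yields that $\bar\lambda:=(\varphi_0^{\max})^{1/(m-1)}$ is a positive $m$-degree Pareto-eigenvalue of $\mathcal{Q}$, so $\bar\lambda\le\lambda_{\max}$, i.e.\ $\varphi_0^{\max}\le\lambda_{\max}^{m-1}$. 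I would simply recall this chain of reasoning in one or two sentences rather than repeat it in detail.

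For the reverse inequality $\lambda_{\max}^{m-1}\le\varphi_0^{\max}$, I would first argue that $\lambda_{\max}$ is attained. Since $\mathcal{A}$ strictly copositive implies $\mathcal{Q}$ is $\mathbb{R}^n_+$-regular, Corollary \ref{compact} tells us $\sigma(\mathcal{Q},\mathbb{R}^n_+)$ is compact; combined with the fact (just shown) that it contains at least one positive element, the maximum $\lambda_{\max}>0$ is achieved by some Pareto-eigenpair $(\lambda_{\max},\bar x)$ with $\bar x\in\mathbb{R}^n_+\setminus\{0\}$. Now apply Theorem \ref{QTqq12}: since $\mathcal{A}$ is copositive and $\lambda_{\max}>0$, the associated point $(\bar u,\bar v)$ defined via \eqref{solu} with $\bar y=(m-1)^{-1/m}\lambda_{\max}^{-1}\bar x$ is a stationary point of \eqref{EFOPt}, in particular feasible. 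The key computation is then to evaluate $\varphi_0(\bar u,\bar v)$: retracing the identity \eqref{ppgt1} from the proof of Theorem \ref{EqEF} with $\bar\gamma=-\lambda_{\max}^{m-1}$ gives $\varphi_0(\bar u,\bar v)=-\bar\gamma=\lambda_{\max}^{m-1}$. Since $\varphi_0^{\max}$ is the global maximum over all feasible points and $(\bar u,\bar v)$ is feasible, we get $\lambda_{\max}^{m-1}=\varphi_0(\bar u,\bar v)\le\varphi_0^{\max}$.

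Combining the two inequalities gives $\lambda_{\max}^{m-1}=\varphi_0^{\max}$. The main obstacle, such as it is, lies in the reverse direction: one must be careful that the normalization in \eqref{solu} is consistent and that the value $\varphi_0(\bar u,\bar v)$ really equals $\lambda_{\max}^{m-1}$ rather than some rescaled quantity. This is exactly the content of the computation in \eqref{GGfd1}–\eqref{ppgt1}, which is homogeneous of the right degree precisely because $(\bar u,\bar v)$ is scaled so that $\phi_0(\bar u,\bar v)=1$; I would verify that the substitution $\bar v=(-\bar\gamma)^{-1/(m-1)}(m-1)^{-1/m}\bar u$ forced in that proof agrees with $\bar y$ up to the common scaling factor, so that the chain $-m\varphi_0(\bar u,\bar v)=m\bar\gamma\mathcal{A}\bar u^m+m\bar\gamma\bar v^\top\bar v^{[m-1]}=m\bar\gamma$ goes through verbatim. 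Everything else is a direct invocation of Theorems \ref{EqEF} and \ref{QTqq12} and Corollary \ref{compact}.
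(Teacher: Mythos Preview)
Your proposal is correct and follows the same two-inequality strategy as the paper, but the execution of the reverse inequality differs slightly. The paper does not invoke Theorem \ref{QTqq12} or the identity \eqref{ppgt1} at all: it simply takes an \emph{arbitrary} positive Pareto-eigenpair $(\bar\lambda,\bar x)$, constructs $(\bar u,\bar v)$ via \eqref{solu}, checks feasibility directly, and then computes $\varphi_0(\bar u,\bar v)$ by a bare substitution using only the complementarity relation $\bar\lambda^m\mathcal{A}\bar x^m+\bar\lambda\mathcal{B}\bar x^m-\bar x^\top\bar x^{[m-1]}=0$ to arrive at $\bar\lambda^{m-1}$. Since this holds for every positive eigenvalue, $\lambda_{\max}^{m-1}\le\varphi_0^{\max}$ follows without needing to argue attainment of $\lambda_{\max}$ via Corollary \ref{compact}. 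Your route through Theorem \ref{QTqq12} (to obtain stationarity with the specific multiplier $\bar\gamma=-\lambda_{\max}^{m-1}$) and then back through \eqref{ppgt1} is valid but somewhat circuitous, since only feasibility of $(\bar u,\bar v)$---not stationarity---is actually required for the inequality; the paper's direct three-line computation is cleaner and avoids re-entering the KKT machinery.
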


\begin{proof}
Let $(\bar \lambda, \bar x)$ with $\bar \lambda>0$ be an $m$-degree Pareto-eigenpair of $\mathcal{Q}$. By the homogeneity of the complementarity system (\ref{QTECiP}) with respect to $x$, without loss of generality, we assume that $\bar x$ satisfies $e^\top \bar x=1$. As a consequence, it immediately follows from the strict copositiveness of $\A$ that $\mathcal{A}\bar x^m>0$. Denote
\begin{equation}\label{baruv}
\bar y=\frac{(m-1)^{-\frac{1}{m}}}{\bar \lambda}\bar x~~~~{\rm and}~~~~(\bar u,\bar v)=\frac{(\bar x,\bar y)}{(\mathcal{A}\bar x^m+\bar y^\top \bar y^{[m-1]})^{\frac{1}{m}}}.
\end{equation}
It is trivial that $(\bar u,\bar v)\in \mathbb{R}_+^n\times \mathbb{R}_+^n$,
$$\mathcal{A}\bar u^m=\frac{1}{\mathcal{A}\bar x^m+\bar y^\top \bar y^{[m-1]}}\mathcal{A}\bar x^m,~~~{\rm and}~~~\bar v^\top\bar v^{[m-1]}=\frac{1}{\mathcal{A}\bar x^m+\bar y^\top \bar y^{[m-1]}}\bar y^\top\bar y^{[m-1]},
$$
which implies that $\mathcal{A}\bar u^m+\bar v^\top \bar v^{[m-1]}=1$ holds. Hence, $(\bar u,\bar v)$ is a feasible solution of \eqref{EFOPt} and $\varphi_0(\bar u,\bar v)\leq \varphi_0^{\rm max}$.

 On the other hand, since $(\bar\lambda, \bar x)$ is an $m$-degree Pareto-eigenpair of $\mathcal{Q}$, we know that $\bar \lambda ^m\mathcal{A}\bar x^{m}+\bar \lambda\mathcal{B}\bar x^{m}-\bar x^\top \bar x^{[m-1]}=0$. Substituting $(\bar u,\bar v)$ into $\varphi_0(u,v)$ yields
\begin{align*}
\varphi_0(\bar u,\bar v)&=m(m-1)^{\frac{1}{m}-1}\bar v^\top \bar u^{[m-1]}-\mathcal{B}\bar u^{m}\\
&=\frac{m\bar x^\top \bar x^{[m-1]}-(m-1)\bar \lambda\mathcal{B}\bar x^m}{(m-1)\bar \lambda^m\mathcal{A}\bar x^m+\bar x^\top \bar x^{[m-1]}}\bar \lambda^{m-1}\\
&=\bar \lambda^{m-1},
\end{align*}
where the second equality comes from (\ref{baruv}). Therefore, it holds that $\bar\lambda^{m-1}\leq \varphi_0^{\rm max}$, which implies that $\lambda_{\rm max}^{m-1}\leq \varphi_0^{\rm max}$. To sum up, we obtain the desired result of this theorem.
\qed\end{proof}

\section{Existence of solutions for THDEiCP}\label{Existence}
In this section, we study more general results on the existence of solutions of THDEiCP with $\Cc:=\pm \I$ and $K=\mathbb{R}_+^n$, but without symmetry assumptions on $\A$ and $\Bc$. We first present the existence result of symmetric tensors.

\begin{theorem}\label{SymmExist}
Let $\mathcal{Q}=(\mathcal{A},\mathcal{B},-\mathcal{I})\in \mathcal{F}_{m,n}$. Assume that $\mathcal{A}$ and $\mathcal{B}$ are both symmetric and $\mathcal{A}$ is strictly copositive. Then $\mathcal{Q}$ has at least an $m$-degree Pareto-eigenpair.
\end{theorem}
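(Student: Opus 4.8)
The plan is to derive existence of an $m$-degree Pareto-eigenpair of $\mathcal{Q}=(\mathcal{A},\mathcal{B},-\mathcal{I})$ from the optimization reformulation \eqref{EFOPt} together with Theorem \ref{EqEF}. First I would observe that, since $\mathcal{A}$ is strictly copositive, for every nonzero $(u,v)\in\mathbb{R}_+^n\times\mathbb{R}_+^n$ we have $\mathcal{A}u^m+v^\top v^{[m-1]}>0$ whenever $u\neq 0$, and $v^\top v^{[m-1]}>0$ whenever $v\neq0$; hence the feasible set
$$
\mathcal{D}:=\left\{(u,v)\in\mathbb{R}_+^n\times\mathbb{R}_+^n~\big|~\mathcal{A}u^m+v^\top v^{[m-1]}=1\right\}
$$
is nonempty (it contains the point $w_t$ exhibited in the discussion preceding Theorem \ref{global2}) and, being the intersection of a closed set with a bounded set (boundedness follows because strict copositiveness forces $\mathcal{A}u^m\geq c\|u\|^m$ for some $c>0$ on $\mathbb{R}_+^n$, while $v^\top v^{[m-1]}=\sum_i v_i^m$ controls $\|v\|$), it is compact. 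The continuous function $\varphi_0$ therefore attains its maximum on $\mathcal{D}$ at some $\bar w=(\bar u,\bar v)$, and $\varphi_0^{\max}=\varphi_0(\bar u,\bar v)$ exists.

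Next I would verify that $\bar w$ is in fact a stationary point of \eqref{EFOPt} in the KKT sense, and that $\bar u\neq0$. Both facts are already established in the paragraph preceding Theorem \ref{global2}: the single equality constraint $\phi_0$ is homogeneous of degree $m$ with $\bar w^\top\nabla\phi_0(\bar w)=m\neq0$, so $\nabla\phi_0(\bar w)\neq0$, and the explicit direction $\bar d$ built from $I(\bar w)$ certifies the Mangasarian--Fromovitz constraint qualification at $\bar w$; hence the KKT system \eqref{Statpoint1} has a solution $(\bar\alpha,\bar\beta,\bar\gamma)$. The inequality $\bar u\neq0$ follows from the same test point $(u_t,v_t)=(te,((1-\bar at^m)/n)^{1/m}e)$, whose objective value $t^m\big(nm(m-1)^{1/m-1}((1-\bar at^m)/n)^{1/m}-\bar bt\big)$ is strictly positive for small $t>0$; this forces $\varphi_0(\bar u,\bar v)=\varphi_0^{\max}>0$, and since $\varphi_0(0,v)=-\mathcal{B}\cdot0=0$ for any feasible $(0,v)$, we must have $\bar u\neq0$.

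Finally I would invoke Theorem \ref{EqEF}: applied to the stationary point $(\bar u,\bar v)$ with $\bar u\neq0$, it yields that $(\bar\lambda,\bar u)$ with $\bar\lambda=(\varphi_0(\bar u,\bar v))^{1/(m-1)}$ is an $m$-degree Pareto-eigenpair of $\mathcal{Q}$ — note $\bar\lambda$ is well defined and real because $\varphi_0(\bar u,\bar v)>0$. This completes the argument. The only step requiring genuine care is the compactness of $\mathcal{D}$ and the verification of MFCQ at the maximizer, i.e., ensuring the optimization problem is well posed and that its global maximizer is genuinely a KKT point; everything after that is a direct citation of Theorems \ref{EqEF} and the computations already carried out in Section \ref{OptForm}. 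I would therefore expect the boundedness estimate coming from strict copositiveness (the constant $c>0$ with $\mathcal{A}u^m\geq c\|u\|^m$ on $\mathbb{R}_+^n$, which follows by compactness of $\{u\in\mathbb{R}_+^n:\|u\|=1\}$ and continuity of $u\mapsto\mathcal{A}u^m$) to be the one point deserving an explicit sentence in the write-up.
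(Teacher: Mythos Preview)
Your proposal is correct and follows essentially the same approach as the paper's own proof: establish compactness of the feasible set of \eqref{EFOPt} via strict copositiveness, obtain a global maximizer $(\bar u,\bar v)$, verify MFCQ so that it is a KKT point, use the test point $(u_t,v_t)$ to force $\varphi_0(\bar u,\bar v)>0$ and hence $\bar u\neq0$, and then invoke Theorem~\ref{EqEF}. The only difference is that you spell out the boundedness estimate $\mathcal{A}u^m\geq c\|u\|^m$ explicitly, whereas the paper simply asserts compactness.
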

\begin{proof}
Consider the homogeneous polynomial optimization problem (\ref{EFOPt}). Since $\mathcal{A}$ is strictly copositive, it is easy to see that the feasible set of (\ref{EFOPt}) is compact. Consequently, from the continuity of the objective function $\varphi_0$ in (\ref{EFOPt}), its globally optimal solution, denoted by $(\bar u, \bar v)$, exists.  As arguments above, the constraint qualification MFCQ holds at $(\bar u,\bar v)$. Hence, $(\bar u,\bar v)$ is a stationary point of (\ref{EFOPt}). Moreover, we know that $\varphi_0(\bar u,\bar v)>0$ from the arguments above, which implies that $\bar u\neq 0$. Therefore, the assertion of Theorem \ref{EqEF} shows that $(\bar \lambda, \bar u)$ is an $m$-degree Pareto-eigenpair of $\mathcal{Q}$, where $\bar \lambda=(\varphi_0(\bar u,\bar v))^{\frac{1}{m-1}}$.
\qed
\end{proof}

The above theorem is a fundamental result for THDEiCP. However, many real-world problems often violate the symmetry condition. In other words, the symmetry assumptions on $\A$ and $\Bc$ are relatively stronger. Indeed, a general existence theorem of solutions of QEiCP have been well established in \cite{S01}, which states that, {\it if $(A,B,C)$ satisfies co-hyperbolicity properties and the leading matrix $A$ is co-regular,
then the considered QEiCP has at least a solution}. As a generalization of QEiCP, we are naturally concerned with whether such a similar result of QEiCP also holds for tensors. Hereafter, we study such a more general result without assuming the symmetry of $\A$ and $\Bc$, in addition to presenting some checkable conditions on $\mathcal{Q}=(\mathcal{A,B,C})$ instead of the co-hyperbolicity.

\begin{theorem}\label{ExistTh}
Let $\mathcal{Q}=(\mathcal{A,B,-I})\in \mathcal{F}_{m,n}$. Assume that $\mathcal{A}$ is strictly copositive. If $\mathcal{A}$ and $\mathcal{B}$ satisfy the following condition
\[\label{condition}
(a_{ii\ldots i}+1-m)(m-1)^{\frac{1}{m}-1}-b_{ii\ldots i}>0, ~~~\forall~i\in [n],
\]
then $\mathcal{Q}$ has at least an $m$-degree Pareto-eigenpair.
\end{theorem}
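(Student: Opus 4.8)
The plan is to reduce the non-symmetric problem to the symmetric framework of Theorem~\ref{SymmExist} by passing to the \emph{symmetrizations} of $\mathcal{A}$ and $\mathcal{B}$. Recall that for any $\mathcal{G}\in\mathcal{T}_{m,n}$ the homogeneous polynomial $\mathcal{G}x^m$ depends only on the symmetric part $\mathrm{sym}(\mathcal{G})$, whose $(i_1\ldots i_m)$ entry is the average of $g_{\pi(i_1)\ldots\pi(i_m)}$ over all permutations $\pi$; moreover $\mathcal{G}x^{m-1}$ and $\mathrm{sym}(\mathcal{G})x^{m-1}$ need \emph{not} coincide, but the complementarity system \eqref{QTECiP} in the Pareto case $K=\mathbb{R}_+^n$ can be analyzed through the scalar quantities $(\mathcal{A}x^m+\cdots)$ together with the componentwise inequalities. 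The first step is therefore to observe that $\widehat{\mathcal{A}}:=\mathrm{sym}(\mathcal{A})$ is still strictly copositive (since $\widehat{\mathcal{A}}x^m=\mathcal{A}x^m>0$ for all $x\in\mathbb{R}_+^n\setminus\{0\}$) and that the diagonal entries are unchanged, $\widehat a_{ii\ldots i}=a_{ii\ldots i}$ and $\widehat b_{ii\ldots i}=b_{ii\ldots i}$, so the triplet $\widehat{\mathcal{Q}}:=(\widehat{\mathcal{A}},\widehat{\mathcal{B}},-\mathcal{I})$ satisfies all hypotheses of Theorem~\ref{SymmExist} and hence has an $m$-degree Pareto-eigenpair $(\bar\lambda,\bar x)$ with $\bar\lambda>0$.

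The crux is then to recover a Pareto-eigenpair of the \emph{original} $\mathcal{Q}$, and this is where condition \eqref{condition} enters. The natural idea is a fixed-point / degree-theoretic argument on the standard simplex $\Delta=\{x\in\mathbb{R}_+^n:e^\top x=1\}$: for $\lambda>0$ define the map sending $x$ to (a normalization of) the solution of the complementarity subproblem obtained by freezing the lower-order terms, or equivalently set up the equivalent reformulation analogous to \eqref{EFOPt} but now with the genuinely non-symmetric $\mathcal{B}u^m$ (note $\mathcal{B}u^m$ is still a well-defined scalar). The inequality \eqref{condition}, rewritten as $(m-1)^{\frac1m-1}(a_{ii\ldots i}+1-m)>b_{ii\ldots i}$, is exactly the statement that along each coordinate axis $x=e_i$ the candidate eigenvalue relation $\lambda^m\mathcal{A}x^{m-1}+\lambda\mathcal{B}x^{m-1}-x^{[m-1]}$ has the correct sign pattern, which should guarantee that a suitably defined homotopy does not lose solutions on the boundary faces of $\Delta$; I would run the homotopy from $\widehat{\mathcal{Q}}$ (or from $\mathcal{B}=0$) to $\mathcal{Q}$, using Proposition~\ref{Propt5}(ii) ($K$-regularity of $\mathcal{A}$, which follows from strict copositivity) to get the a priori bound $|\lambda|\le\lambda_{\max}$ uniformly along the homotopy, and Proposition~\ref{Propt5}(i) for closedness of the solution set, so that the only thing that can fail is escape through $x\to 0$ on a boundary face — and \eqref{condition} rules that out face by face via Proposition~\ref{ECharact}.

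Concretely I would argue by induction on $n$ using Proposition~\ref{ECharact}: for each nonempty $J\subseteq[n]$ the principal subtensor $\mathcal{A}_J$ is strictly copositive and the diagonal condition \eqref{condition} is inherited by $\mathcal{Q}_J$, so by the inductive hypothesis (base case $|J|=1$ being a single scalar equation $\lambda^m a_{ii\ldots i}+\lambda b_{ii\ldots i}-1=0$, which has a positive root precisely because... — here one checks the discriminant/sign, and \eqref{condition} is what makes the relevant continuous function change sign) $\mathcal{Q}_J$ has a \emph{strict} Pareto-eigenpair $(\lambda_J,w_J)$ with $w_J\in\mathbb{R}_{++}^{|J|}$; then one selects among these the pair for which the outside inequalities $\sum_{i_2,\ldots,i_m\in J}(\lambda^m a_{ii_2\ldots i_m}+\lambda b_{ii_2\ldots i_m}-\delta_{i\ldots i})w_{i_2}\cdots w_{i_m}\ge 0$ hold for all $i\notin J$ — existence of such a $J$ being forced by taking $J$ minimal, because for a minimal $J$ a violated outside inequality would let one construct a Pareto-eigenpair on a strictly smaller index set, contradicting minimality.

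\medskip

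\textbf{Main obstacle.} The delicate point is the last selection step: controlling the outside inequalities and showing that \emph{some} $J$ (not merely some subproblem) yields a genuine Pareto-eigenpair of $\mathcal{Q}$ on all of $[n]$. In the symmetric case Theorem~\ref{SymmExist} handled this automatically via a global optimization problem whose KKT multipliers \emph{are} the outside slacks; without symmetry there is no such variational structure for $\mathcal{B}$, so one must instead extract the correct face combinatorially, and it is precisely here that condition \eqref{condition} — a purely diagonal, easily checkable surrogate for co-hyperbolicity — must be shown to be strong enough to close the argument. I expect that verifying the base case $|J|=1$ sign change and the ``minimal $J$'' contradiction are the two places where \eqref{condition} is used essentially, and getting the quantifiers right there will be the bulk of the work.
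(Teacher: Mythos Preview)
Your combinatorial selection step does not work. Suppose $J$ is minimal among index sets supporting a strict Pareto-eigenpair of $\mathcal{Q}_J$, and some outside inequality at $i\notin J$ is violated, i.e.\ the $i$-th component of $(\lambda^m\mathcal{A}+\lambda\mathcal{B}-\mathcal{I})\tilde w^{\,m-1}$ is negative, where $\tilde w$ is the zero-extension of $w$. This negativity gives no information about proper subsets $J'\subsetneq J$; if anything it signals that index $i$ wants to \emph{enter} the support, not that some index should leave. There is no contradiction with minimality of $J$. Notice also that the base case $|J|=1$ always has a positive root (the map $\lambda\mapsto\lambda^m a_{ii\ldots i}+\lambda b_{ii\ldots i}-1$ equals $-1$ at $\lambda=0$ and tends to $+\infty$ since $a_{ii\ldots i}>0$ by strict copositivity), so condition \eqref{condition} plays no role there; consequently every singleton carries a strict eigenpair, yet the outside inequalities for a singleton may all fail, and your minimality argument has nowhere to go. The homotopy sketch suffers from the same defect: a priori bounds on $\lambda$ via $K$-regularity do not prevent the support of the eigenvector from collapsing to a face along the deformation, and \eqref{condition} as you interpret it (a sign check at the vertices $e_i$) does not control that collapse.

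The paper's proof is entirely different and does not proceed face by face. It introduces an auxiliary variable $y\in\mathbb{R}_+^n$ (the analogue of $v$ in \eqref{EFOPt}), the set $S=\{(x,y):x\ge 0,\ e^\top x=1,\ y\ge 0\}$, and a bifunction $F(x,y;z,w)$ that is concave in $(z,w)$, lower-semicontinuous in $(x,y)$, and vanishes on the diagonal. Condition \eqref{condition} is used for exactly one thing: it forces the set $\{(z,w)\in S:F(x,y;z,w)\le 0\ \text{for all }(x,y)\in S_0\}$ to be bounded, by testing against $(x,y)=(e_{i_0},e_{i_0})$ whenever $w_{i_0}\to\infty$. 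Ky Fan's minimax inequality then produces $(\bar x,\bar y)\in S$ with $F(\bar x,\bar y;z,w)\le 0$ for all $(z,w)\in S$, and specializing $(z,w)$ yields both the inequality $\bar\lambda^m\mathcal{A}\bar x^{m-1}+\bar\lambda\mathcal{B}\bar x^{m-1}-\bar x^{[m-1]}\ge 0$ with $\bar\lambda=\bar f^{1/(m-1)}>0$ and the complementarity $\langle\bar x,\cdot\rangle=0$. Thus the outside slacks you were trying to select combinatorially are delivered all at once by the minimax inequality; condition \eqref{condition} is a coercivity hypothesis for that argument, not a boundary sign condition on faces.
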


\begin{proof}
We first denote two sets by
$$S=\{(x,y)\in \mathbb{R}^n_+\times \mathbb{R}^n_+~|~x\geq 0,e^\top x=1,y\geq0\}~~~{\rm and}~~~S_0=\{(x,y)\in S~|~\|y\|\leq 1\}.$$
It is clear that $S_0$ is a compact convex subset of $S$. Define the function $F:S\times S\rightarrow \mathbb{R}$ by
\begin{align}\label{Fun}
F(x,y;z,w)=&\langle - \mathcal{B}x^{m-1}-f(x,y)\mathcal{A}x^{m-1}+(m-1)^{\frac{1}{m}}{\rm diag}(y)x^{[m-2]},z\rangle \nn\\
&+\langle (m-1)^{\frac{1}{m}-1}x^{[m-1]}-f(x,y)y^{[m-1]},w\rangle,
\end{align}
where
$$
f(x,y)=\frac{m(m-1)^{\frac{1}{m}-1}y^\top x^{[m-1]}-\mathcal{B}x^m}{\mathcal{A}x^m+y^\top y^{[m-1]}}.
$$
Apparently, $F(x,y;x,y)=0$ holds for any $(x,y)\in S$. Moreover, it can be seen that $F(\cdot,\cdot;z,w)$ is lower-semicontinuous on $S$ for any fixed $(z,w)\in S$, and $F(x,y;\cdot,\cdot)$ is concave on $S$ for any fixed $(x,y)\in S$. With the given condition \eqref{condition}, we claim that
$$\Omega:=\{(z,w)\in S~|~F(x,y;z,w)\leq 0,~\forall~(x,y)\in S_0\}$$
is compact. Otherwise, if $\Omega$ is not compact, then exists a sequence $\{(z^{(k)},w^{(k)})\}$ of $\Omega$ such that
$$\|(z^{(k)},w^{(k)})\|\rightarrow +\infty\quad \text{as}\quad k\rightarrow +\infty.$$
Since $\{z^{(k)}\}$ is bounded, without loss of generality, we claim that $\|w^{(k)}\|\rightarrow +\infty$. As a consequence, there exists $i_0\in [n]$ such that $w_{i_0}^{(k)}\rightarrow +\infty$. By taking $x^{(k)}=y^{(k)}=e_{i_0}\in S_0$ with $e_{i_0}$ being the $i_0$-th unit vector in $\mathbb{R}^n$, we have
$$
F(x^{(k)},y^{(k)};z^{(k)},w^{(k)})=\theta_k+\frac{(a_{i_0\ldots i_0}+1-m)(m-1)^{\frac{1}{m}-1}-b_{i_0\ldots i_0}}{a_{i_0\ldots i_0}+1}w_{i_0}^{(k)},
$$
where
$$\theta_k=\langle - \mathcal{B}(x^{(k)})^{m-1}-f(x^{(k)},y^{(k)})\mathcal{A}(x^{(k)})^{m-1}+(m-1)^{\frac{1}{m}}{\rm diag}(y^{(k)})(x^{(k)})^{[m-2]},z^{(k)}\rangle.$$
Clearly, the sequence $\{\theta_k\}$ is bounded. It follows from the condition \eqref{Concl} that $$F(x^{(k)},y^{(k)};z^{(k)},w^{(k)})>0$$
for enough large $k$, which contradicts the fact that $(z^{(k)},w^{(k)})\in\Omega$. By Theorem 6 in \cite{KF84}, there exists $(\bar x,\bar y)\in S$ such that
\begin{equation}\label{KFIn}
F(\bar x,\bar y;z,w)\leq 0,~~~\forall~(z,w)\in S.
\end{equation}
Take $w=0$ in \eqref{KFIn}, we know that, for any $z\in D:=\{z\in \mathbb{R}^n~|~z\geq 0,e^\top z=1\}$,
$$
F(\bar x,\bar y;z,0)=\langle - \mathcal{B}\bar x^{m-1}-\bar f\mathcal{A}\bar x^{m-1}+(m-1)^{\frac{1}{m}}{\rm diag}(\bar y)\bar x^{[m-2]},z\rangle\leq 0,
$$
where $\bar f=f(\bar x,\bar y)$, which implies
\begin{equation}\label{ABC}
\mathcal{B}\bar x^{m-1}+\bar f\mathcal{A}\bar x^{m-1}-(m-1)^{\frac{1}{m}}{\rm diag}(\bar y)\bar x^{[m-2]}\geq 0,
\end{equation} since $D$ is a basis of $\mathbb{R}_+^n$. Take again any $w\in \mathbb{R}_+^n$, it is clear that $(\bar x,\bar y+w)\in S$. Consequently, it holds that
\begin{align*}
F(\bar x,\bar y;\bar x, \bar y+w)&=F(\bar x,\bar y;\bar x, \bar y)+\langle (m-1)^{\frac{1}{m}-1}\bar x^{[m-1]}-\bar f\bar y^{[m-1]}, w\rangle\\
&=\langle (m-1)^{\frac{1}{m}-1}\bar x^{[m-1]}-\bar f\bar y^{[m-1]}, w\rangle \\
&\leq 0,
\end{align*}
where the second equality is due to the fact that $F(\bar x,\bar y;\bar x, \bar y)=0$. Hence,
\begin{equation}\label{Axy}
\bar f\bar y^{[m-1]}-(m-1)^{\frac{1}{m}-1}\bar x^{[m-1]}\geq 0.
\end{equation}
Since $\bar x\geq 0$ and $\bar x\neq 0$, there exists $i_0\in [n]$ such that $\bar x_{i_0}> 0$. Accordingly,
$$
\bar f\bar y_{i_0}^{m-1}\geq (m-1)^{\frac{1}{m}-1}\bar x_{i_0}^{m-1}>0,
$$
which implies $\bar f>0$. Denote $I(\bar y)=\{i\in [n]~|~\bar y_i=0\}$. It is clear that $I(\bar y)$ is a proper subset of $[n]$. By (\ref{Axy}), it is obvious that $\bar x_i=0$ for any $i\in I(\bar y)$. So
\begin{equation}\label{Iy}
\bar f\bar y_i^{m-1}=(m-1)^{\frac{1}{m}-1}\bar x_i^{m-1},~~~~\forall ~i\in I(\bar y).
\end{equation}
For any $i\in [n]\backslash I(\bar y)$, taking $w=te_i$ with $t\in \mathbb{R}$, it follows from $\bar y_i>0$ that $(\bar x,\bar y+w)\in S$ for any real number $t$ with enough small $|t|$. Recalling \eqref{KFIn}, we have
$$
F(\bar x,\bar y;\bar x, \bar y+w)\leq 0,
$$
which implies that
$$ \left((m-1)^{\frac{1}{m}-1}\bar x_i^{m-1}-\bar f\bar y_i^{m-1}\right)t\leq 0 $$
for $i\in [n]\backslash I(\bar y)$ and any real number $t$ with enough small $|t|$. We immediately obtain
\begin{equation}\label{Icy}
(m-1)^{\frac{1}{m}-1}\bar x_i^{m-1}=\bar f\bar y_i^{m-1},~~~\forall~i\in [n]\backslash I(\bar y).
 \end{equation}
By \eqref{Iy} and \eqref{Icy}, it holds that
\begin{equation}\label{Eqy}
\bar f\bar y^{[m-1]}=(m-1)^{\frac{1}{m}-1}\bar x^{[m-1]},
 \end{equation}
or equivalently,
\begin{equation}\label{Eqy2}
\bar f^{\frac{1}{m-1}}\bar y=(m-1)^{-\frac{1}{m}}\bar x.
 \end{equation}
Combining \eqref{ABC} and \eqref{Eqy} leads to
\begin{align*}
0&\leq\mathcal{B}\bar x^{m-1}+\bar f\mathcal{A}\bar x^{m-1}-\bar f^{-\frac{1}{m-1}}\bar x^{[m-1]}\\
&=\bar f^{-\frac{1}{m-1}}\left\{\bar f^{\frac{1}{m-1}}\mathcal{B}\bar x^{m-1}+\bar f^{\frac{m}{m-1}}\mathcal{A}\bar x^{m-1}-\bar x^{[m-1]}\right\},
\end{align*}
which implies
\begin{equation}
\bar \lambda ^m\mathcal{A}\bar x^{m-1}+\bar \lambda\mathcal{B}\bar x^{m-1}-\bar x^{[m-1]}\geq 0,
\end{equation}
where $\bar \lambda=\bar f^{\frac{1}{m-1}}$. Now we verify that
$$\left\langle \bar x,\; \bar \lambda ^m\mathcal{A}\bar x^{m-1}+\bar \lambda\mathcal{B}\bar x^{m-1}-\bar x^{[m-1]}\right\rangle=0.$$
We only need to verify
$$\left\langle \bar x, \bar f\mathcal{A}\bar x^{m-1}+\mathcal{B}\bar x^{m-1}-\bar f^{-\frac{1}{m-1}}\bar x^{[m-1]}\right\rangle=0,$$
that is,
$$\bar f\mathcal{A}\bar x^{m}+\mathcal{B}\bar x^{m}-\bar f^{-\frac{1}{m-1}}\sum_{i=1}^n\bar x_i^m=0.$$
Since $F(\bar x,\bar y;\bar x,\bar y)=0$, that is,
$$\bar f\mathcal{A}\bar x^{m}+\mathcal{B}\bar x^{m}=m(m-1)^{\frac{1}{m}-1}\bar y^\top \bar x^{[m-1]}-\bar f \sum_{i=1}^n\bar y_i^m,$$
we only need to further verify
\begin{equation}\label{mmn}
m(m-1)^{\frac{1}{m}-1}\bar y^\top \bar x^{[m-1]}-\bar f \sum_{i=1}^n\bar y_i^m-\bar f^{-\frac{1}{m-1}}\sum_{i=1}^n\bar x_i^m=0.
\end{equation}
Actually, the left hand of \eqref{mmn} amounts to
\begin{align*}
&\;m(m-1)^{\frac{1}{m}-1}\bar y^\top \bar x^{[m-1]}-\bar f \bar y^\top \bar y^{[m-1]}-\bar f^{-\frac{1}{m-1}}\sum_{i=1}^n\bar x_i^m\\
&\; =m(m-1)^{\frac{1}{m}-1}\bar y^\top \bar x^{[m-1]}-(m-1)^{\frac{1}{m}-1}\bar y^\top \bar x^{[m-1]}-\bar f^{-\frac{1}{m-1}}\sum_{i=1}^n\bar x_i^m\\
&\; =(m-1)^{\frac{1}{m}}\bar y^\top \bar x^{[m-1]}-\bar f^{-\frac{1}{m-1}}\bar x^\top \bar x^{[m-1]}\\
&\; =(m-1)^{\frac{1}{m}}\bar y^\top \bar x^{[m-1]}-\bar f^{-\frac{1}{m-1}}(m-1)^{\frac{1}{m}}\bar f^{\frac{1}{m-1}}\bar y^\top \bar x^{[m-1]}\\
&\;=0,
\end{align*}
where the first equality is due to (\ref{Eqy}), and the second equality comes from (\ref{Eqy2}).
Therefore, $(\bar \lambda, \bar x)$ is an $m$-degree Pareto-eigenpair of $\mathcal{Q}$.
\qed
\end{proof}

Similarly, when we deal with the case of $\Cc:=\I$, we can also establish the following result.
 \begin{theorem}\label{ExistTh2}
Let $\mathcal{Q}=(\mathcal{A,B,I})\in \mathcal{F}_{m,n}$. Assume that $\mathcal{A}$ is strictly copositive. If $\mathcal{A}$ and $\Bc$ satisfy the following condition
$$
(m+a_{ii\ldots i}-1)(m-1)^{\frac{1}{m}-1}+b_{ii\ldots i}>0, ~~~\forall~i=1,2,\ldots,n,
$$
then $\mathcal{Q}$ has at least an $m$-degree Pareto-eigenpair.
\end{theorem}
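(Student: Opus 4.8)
The plan is to run the argument of Theorem~\ref{ExistTh} essentially verbatim, making the sign changes forced by replacing the trailing tensor $-\mathcal{I}$ with $\mathcal{I}$. First I would keep the same ingredients
$$S=\{(x,y)\in\mathbb{R}^n_+\times\mathbb{R}^n_+~|~e^\top x=1,~y\geq 0\},\qquad S_0=\{(x,y)\in S~|~\|y\|\leq 1\},$$
so that $S_0$ is a compact convex subset of $S$, and introduce the bifunction $F:S\times S\to\mathbb{R}$ obtained from \eqref{Fun} by reversing the sign of the term $(m-1)^{\frac{1}{m}}{\rm diag}(y)x^{[m-2]}$, namely
\begin{align*}
F(x,y;z,w)=&\langle -\mathcal{B}x^{m-1}-f(x,y)\mathcal{A}x^{m-1}-(m-1)^{\frac{1}{m}}{\rm diag}(y)x^{[m-2]},z\rangle\\
&+\langle (m-1)^{\frac{1}{m}-1}x^{[m-1]}-f(x,y)y^{[m-1]},w\rangle,
\end{align*}
where the scalar $f(x,y)$ is picked so that $F(x,y;x,y)\equiv 0$; a short computation shows this forces
$$f(x,y)=\frac{-\mathcal{B}x^m-(m-2)(m-1)^{\frac{1}{m}-1}y^\top x^{[m-1]}}{\mathcal{A}x^m+y^\top y^{[m-1]}},$$
which is well defined on $S$ since strict copositivity of $\mathcal{A}$ and $e^\top x=1$ give $\mathcal{A}x^m+y^\top y^{[m-1]}>0$. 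As in Theorem~\ref{ExistTh}, $F(\cdot,\cdot;z,w)$ is lower-semicontinuous on $S$ and $F(x,y;\cdot,\cdot)$ is affine, hence concave, on $S$.

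The analytic crux, and the only place the hypothesis enters, is to show that
$$\Omega:=\{(z,w)\in S~|~F(x,y;z,w)\leq 0,~\forall~(x,y)\in S_0\}$$
is compact. Arguing by contradiction as in Theorem~\ref{ExistTh}, an unbounded sequence $\{(z^{(k)},w^{(k)})\}\subset\Omega$ must have $\|w^{(k)}\|\to+\infty$ (the $z^{(k)}$ staying bounded), so $w^{(k)}_{i_0}\to+\infty$ for some $i_0$; evaluating $F$ at the test point $x^{(k)}=y^{(k)}=e_{i_0}\in S_0$ and simplifying should give
$$F(e_{i_0},e_{i_0};z^{(k)},w^{(k)})=\theta_k+\frac{(m+a_{i_0\ldots i_0}-1)(m-1)^{\frac{1}{m}-1}+b_{i_0\ldots i_0}}{a_{i_0\ldots i_0}+1}w^{(k)}_{i_0}$$
with $\{\theta_k\}$ bounded. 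The assumed inequality makes the coefficient of $w^{(k)}_{i_0}$ strictly positive (its denominator $a_{i_0\ldots i_0}+1$ being positive by strict copositivity of $\mathcal{A}$), so $F(e_{i_0},e_{i_0};z^{(k)},w^{(k)})\to+\infty$, contradicting $(z^{(k)},w^{(k)})\in\Omega$. With $\Omega$ compact, Theorem~6 in \cite{KF84} produces $(\bar x,\bar y)\in S$ with $F(\bar x,\bar y;z,w)\leq 0$ for all $(z,w)\in S$.

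It then remains to decode this variational inequality, following the last part of the proof of Theorem~\ref{ExistTh} step by step up to signs. Taking $w=0$ and letting $z$ range over the simplex $D=\{z\in\mathbb{R}^n~|~z\geq 0,~e^\top z=1\}$ gives the componentwise inequality $\mathcal{B}\bar x^{m-1}+\bar f\mathcal{A}\bar x^{m-1}+(m-1)^{\frac{1}{m}}{\rm diag}(\bar y)\bar x^{[m-2]}\geq 0$; taking $z=\bar x$ and letting $w$ range over $\mathbb{R}^n_+$, then $w=te_i$ with $|t|$ small on the support of $\bar y$, forces $\bar f\bar y^{[m-1]}=(m-1)^{\frac{1}{m}-1}\bar x^{[m-1]}$ everywhere, hence $\bar y$ to be a positive scalar multiple of $\bar x$ with $\bar f>0$ (because $\bar x\neq 0$). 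Substituting this proportionality back, and using $F(\bar x,\bar y;\bar x,\bar y)=0$ to supply the complementarity identity, one arrives, with $\bar\lambda:=\bar f^{\frac{1}{m-1}}>0$, at
$$\bar\lambda^m\mathcal{A}\bar x^{m-1}+\bar\lambda\mathcal{B}\bar x^{m-1}+\bar x^{[m-1]}\geq 0,\qquad\langle\bar x,\bar\lambda^m\mathcal{A}\bar x^{m-1}+\bar\lambda\mathcal{B}\bar x^{m-1}+\bar x^{[m-1]}\rangle=0,$$
which together with $\bar x\in\mathbb{R}^n_+\setminus\{0\}$ exhibits $(\bar\lambda,\bar x)$ as an $m$-degree Pareto-eigenpair of $\mathcal{Q}$.

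The step I expect to be the main obstacle is not conceptual but is the sign bookkeeping: the construction commits to a particular sign in front of $(m-1)^{\frac{1}{m}}{\rm diag}(y)x^{[m-2]}$, a particular numerator for $f$, and the rule $\bar\lambda=\bar f^{\frac{1}{m-1}}$, and one has to verify that this single choice makes \emph{all three} requirements hold at once -- (i) $F(x,y;x,y)\equiv 0$; (ii) the variational inequality decodes into the complementarity system for $\mathcal{Q}=(\mathcal{A},\mathcal{B},\mathcal{I})$ carrying $+\bar x^{[m-1]}$ rather than $-\bar x^{[m-1]}$; and (iii) the coefficient of $w^{(k)}_{i_0}$ in the coercivity estimate lands precisely on $(m+a_{i_0\ldots i_0}-1)(m-1)^{\frac{1}{m}-1}+b_{i_0\ldots i_0}$ up to the positive factor $(a_{i_0\ldots i_0}+1)^{-1}$ -- and there is essentially no slack. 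A secondary point to watch is the parity of $m$: the derivation forces $\bar f>0$, so $\bar f^{\frac{1}{m-1}}$ is an unambiguous positive real and no case split on $m$ should be needed, but because the constant multiplying $y^\top x^{[m-1]}$ in $f$ has changed from $m$ (in Theorem~\ref{ExistTh}) to $2-m$, the coercivity coefficient and the final identity must be recomputed carefully rather than quoted.
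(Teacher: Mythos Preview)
Your proposal takes exactly the paper's approach: your bifunction $F$ and scalar $f$ coincide with the paper's $G$ and $h$ (note that $-(m-2)=2-m$ in the numerator of $f$), and the remainder tracks the argument of Theorem~\ref{ExistTh} with precisely the sign changes the paper indicates. Your sign bookkeeping---including the verification that $F(x,y;x,y)\equiv 0$, the coercivity coefficient $\big((m+a_{i_0\ldots i_0}-1)(m-1)^{\frac{1}{m}-1}+b_{i_0\ldots i_0}\big)/(a_{i_0\ldots i_0}+1)$, and the decoding to $\bar\lambda^m\mathcal{A}\bar x^{m-1}+\bar\lambda\mathcal{B}\bar x^{m-1}+\bar x^{[m-1]}\geq 0$---is carried out correctly.
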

\begin{proof}
Define the function $h:\mathbb{R}_+^n\times \mathbb{R}_+^n\rightarrow \mathbb{R}$ by
$$
h(x,y)=\frac{(2-m)(m-1)^{\frac{1}{m}-1}y^\top x^{[m-1]}-\mathcal{B}x^m}{\mathcal{A}x^m+y^\top y^{[m-1]}}.
$$
and the function $G:S\times S\rightarrow \mathbb{R}$ by
$$
\begin{array}{lll}
G(x,y;z,w)&=&\langle - \mathcal{B}x^{m-1}-h(x,y)\mathcal{A}x^{m-1}-(m-1)^{\frac{1}{m}}{\rm diag}(y)x^{[m-2]},z\rangle\\
&&+\langle (m-1)^{\frac{1}{m}-1}x^{[m-1]}-h(x,y)y^{[m-1]},w\rangle,
\end{array}
$$
where $S$ is defined in the proof of Theorem \ref{ExistTh}. We can prove the assertion in a similar way that used in Theorem \ref{ExistTh}, and skip its details here. \qed
\end{proof}

As a byproduct of Theorem \ref{ExistTh2}, we immediately  obtain the following existence result of the solution for QEiCP, which differs from the one presented in \cite{S01}.
\begin{corollary}
Consider QEiCP corresponding to the special case of THDEiCP with $m=2$. Let ${\mathcal Q}:=(A,B,I)\in \mathcal{M}_n$. Assume that $A$ is strictly copositive matrix. If $A$ and $B$ satisfy that $a_{ii}+b_{ii}+1>0$ for every $i\in [n]$, then ${\mathcal Q}$ has at least one quadratic Pareto-eigenpair.
\end{corollary}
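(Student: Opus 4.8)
The plan is simply to recognize the statement as the instance $m=2$ of Theorem~\ref{ExistTh2} and to verify that each hypothesis translates correctly under the obvious dictionary between the two settings. First I would note that QEiCP with $\mathcal{Q}=(A,B,I)\in\mathcal{M}_n$ is exactly problem \eqref{QTECiP} with $m=2$, $\mathcal{C}=\mathcal{I}$, and $K=\mathbb{R}_+^n$, since for a vector $x$ one has $(\lambda^2 A+\lambda B+I)x^{m-1}=(\lambda^2 A+\lambda B+I)x$ when $m=2$; correspondingly, a quadratic Pareto-eigenpair of $\mathcal{Q}$ is precisely a $2$-degree Pareto-eigenpair in the sense of the paper. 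Moreover, a matrix $A$ is strictly copositive in the classical sense if and only if the second-order tensor $A\in\mathcal{T}_{2,n}$ is strictly copositive in the sense of Definition~\ref{copositivedef}, so the assumption on $A$ carries over unchanged.

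Next I would check that the structural condition of Theorem~\ref{ExistTh2},
$$
(m+a_{ii\ldots i}-1)(m-1)^{\frac{1}{m}-1}+b_{ii\ldots i}>0,\qquad \forall\, i\in[n],
$$
reduces exactly to the stated hypothesis when $m=2$. Indeed, $(m-1)^{\frac{1}{m}-1}=1^{-1/2}=1$ and $m-1=1$, so the left-hand side becomes $(1+a_{ii})\cdot 1+b_{ii}=a_{ii}+b_{ii}+1$; thus the inequality is precisely $a_{ii}+b_{ii}+1>0$ for every $i\in[n]$, which is the hypothesis of the corollary.

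With both hypotheses of Theorem~\ref{ExistTh2} in force, that theorem yields an $m$-degree Pareto-eigenpair of $\mathcal{Q}$, which under the identification above is a quadratic Pareto-eigenpair, completing the argument. No genuine difficulty arises: the only points deserving care are the evaluation of the exponent $(m-1)^{1/m-1}$ at $m=2$ and the (routine) observation that classical strict copositivity of a matrix coincides with strict copositivity of the corresponding second-order tensor, both of which I have already addressed above.
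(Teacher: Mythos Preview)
Your proposal is correct and matches the paper's approach exactly: the paper presents this corollary explicitly ``as a byproduct of Theorem~\ref{ExistTh2}'' with no further argument, and your verification that the condition $(m+a_{ii\ldots i}-1)(m-1)^{\frac{1}{m}-1}+b_{ii\ldots i}>0$ specializes to $a_{ii}+b_{ii}+1>0$ when $m=2$ is precisely the intended (and only) step.
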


\section{Numerical algorithm and experiments}\label{NumAlg}
In this section, we first introduce an implementable splitting algorithm based upon the augmented Lagrangian method, which efficiently exploits the weakly coupled structure of the resulting optimization formulation of THDEiCP. Then, we conduct some computational results to show the reliability and convergence behavior of the proposed algorithm.

\subsection{The algorithm}\label{SecAlg}

Note that model \eqref{EFOPt} can be recast as the standard minimization problem:
\begin{align}
\min &\;\; \Bc u^m + {\btheta}v^\t u^{[m-1]}\nn \\
{\rm s.t.} &\;\; \A u^m + \I v^m = 1, \label{mini}\\
&\;\; u\geq 0,\;\;v\geq 0,\nn
\end{align}
where $\btheta$ is a constant given by $\btheta:=-m(m-1)^{\frac{1}{m}-1}$. Here, we should notice that it is possible to employ the powerful {\it semismooth} and {\it smoothing} Newton methods to solve the model under consideration. However, we show below that a first-order structure-exploiting algorithm can be developed, which is much easier to be implemented than the second-order type methods.

Taking a revisit on \eqref{mini}, we observe that \eqref{mini} is an equality constrained optimization problem, and we know that the Augmented Lagrangian Method (ALM) \cite{Hes69,Pow69} is a benchmark solver for this type model. Let $\bzeta\in\R$ be the Lagrangian multiplier associated to the equality constraint. The augmented Lagrangian function is given by
\[\label{ALF} \L(u,v,\bzeta):= \Bc u^m + \btheta v^\t u^{[m-1]} -\bzeta\left(\A u^m + \I v^m -1\right) + \frac{\beta}{2}\left(\A u^m + \I v^m -1\right)^2,\]
where $\beta>0$ is the penalty parameter. Consequently, for a given $\bzeta^{(k)}\in\R$, the iterative scheme of ALM reads as follows:
\begin{subnumcases}{\label{ALM}}
(u^{(k+1)},v^{(k+1)})=\arg\min_{u,v} \left\{\L(u,v,\bzeta^{(k)})\;|\;u\geq 0,\;v\geq0\right\}; \label{ALMa}\\
\bzeta^{(k+1)}=\bzeta^{(k)} - \beta \lb \A (u^{(k+1)})^m +\I (v^{(k+1)})^m -1\rb. \label{ALMb}
\end{subnumcases}
However, it seems not easy enough to implement such an algorithm due to the {\it coupled structure} and {\it high nonlinearity} emerging in the objective function and equality constraint. To improve its implementability and numerical performance, the so-called Alternating Direction Method of Multipliers (ADMM) \cite{GM76,GM75} was judiciously developed for {\it separable} convex minimizations by updating the variables in an {\it alternating} (Gauss-Seidel) order. In recent years, it is well documented that ADMM has a surge of popularity in the areas such as signal/image processing, statistical learning, data mining, and so on. Here we just refer to \cite{BPCPE10,EY15,Glo14} for some surveys on ADMM.

Following the spirit of ADMM, we split the first subproblem \eqref{ALMa} into two parts. For given $(v^{(k)},\bzeta^{(k)})$, we immediately have the following ADMM scheme:
\begin{subnumcases}{\label{ADM}}
u^{(k+1)}=\arg\min_{u} \left\{\L(u,v^{(k)},\bzeta^{(k)})\;|\;u\geq 0\right\}; \label{ADMa}\\
v^{(k+1)}=\arg\min_{v} \left\{\L(u^{(k+1)},v,\bzeta^{(k)})\;|\;v\geq0\right\}; \label{ADMb}\\
\bzeta^{(k+1)}=\bzeta^{(k)} - \beta \lb \A (u^{(k+1)})^m +\I (v^{(k+1)})^m -1\rb. \label{ADMc}
\end{subnumcases}
It seems that such an algorithm exploits the weakly separable structure of model \eqref{mini}. However, it also fails to be easily implemented, because the first two subproblems are not easy enough to have closed-form solutions. Indeed, we can clearly observe that both subproblems \eqref{ADMa} and \eqref{ADMb} have very simple convex sets as their constraints, thereby making the projections onto these sets very easy. Hence, it would greatly simplify \eqref{ADM} if both subproblems could reduce to the computation of projections. Below, we consider the linearized version of \eqref{ADM} so that each subproblem has closed-form representation. Since $\L(u,v,\bzeta)$ is nonconvex with respect to $u$ and $v$ in general cases, for the purpose of making both subproblems well-posed, we attach two proximal terms $\frac{\gamma_1}{2}\|u-u^{(k)}\|^2$ and $\frac{\gamma_2}{2}\|v-v^{(k)}\|^2$ to \eqref{ADMa} and \eqref{ADMb}, respectively. Here $\gamma_1$ and $\gamma_2$ are two positive constants. More specifically, linearizing the nonlinear parts of $\L(u,v,\bzeta)$ (see gradients in \eqref{Delx}), we derive a linearized ADMM as follows:
\begin{subnumcases}{\label{LADM}}
u^{(k+1)} = \Pi_{\R^n_{+}}\left[u^{(k)} -\frac{\bPhi^{(k)} }{\gamma_1}  \right], \label{LADM1} \\
v^{(k+1)} = \Pi_{\R^n_{+}}\left[v^{(k)}- \frac{\btheta (u^{(k+1)})^{[m-1]}+\bUpsilon^{(k)}}{\gamma_2}  \right], \label{LADM2}\\
\bzeta^{(k+1)} = \bzeta^{(k)} - \beta\left(\A (u^{(k+1)})^m + \I (v^{(k+1)})^m -1\right), \label{LADM3}
\end{subnumcases}
where $\Pi_{\R^n_+}[\cdot]$ represents the projection onto $\R^n_+$;
$$ \bPhi^{(k)}:=m\Bc (u^{(k)})^{m-1} +\btheta (m-1){\rm diag}(v^{(k)})(u^{(k)})^{[m-2]} +\beta m {\bm q}^{(k)}\A (u^{(k)})^{m-1},$$
with ${\bm q}^{(k)} := \A (u^{(k)})^m+\I (v^{(k)})^m-1-\frac{\bzeta^{(k)}}{\beta}$; and
$$ \bUpsilon^{(k)}:=\beta m\left(\A (u^{(k+1)})^m +\I (v^{(k)})^m-1-\frac{\bzeta^{(k)}}{\beta} \right)\I (v^{(k)})^{m-1}.$$
Obviously, the linearized version \eqref{LADM} is more implementable than \eqref{ADM} due to the pretty simple iterative scheme. To the best of our knowledge, there is no convergence result of such a linearized ADMM for solving the underlying {\it nonconvex} model. Therefore, it seems that our method \eqref{LADM} goes beyond the theoretical guarantees of the traditional ADMM. However, we will illustrate that our method \eqref{LADM} indeed is numerically convergent for model \eqref{mini} in many cases.

\subsection{Numerical experiments}
We have shown theoretically that THDEiCP \eqref{QTECiP} is solvable when $K:=\R^n_+$ in Section \ref{Existence} and introduce an implementable splitting method in Section \ref{SecAlg}. Now, we turn our attention to verifying our theoretical results and convergence behavior of the proposed algorithm \eqref{LADM} through preliminary computational results. We implement our algorithm by {\sc Matlab} R2012b and conduct the numerical experiments on a Lenovo notebook with Intel(R) Core(TM) i5-5200U CPU@2.20GHz and 4GB RAM running on Windows 7 Home Premium operating system.

Notice that model \eqref{mini} is also available for matrix cases, and it is a new formulation in the QEiCP literature. Thus, we here also test a matrix scenario for the purpose of showing the efficiency of our proposed algorithm \eqref{LADM} in solving QEiCP. In the following experiments, we test three synthetic examples and only list the details of $\A$ (or $A$) and $\Bc$ (or $B$) in the coming examples. For the random data, we generate them by the {\sc Matlab} script `\verb"rand"'.

\begin{example}\label{exam1}
This example considers a special case of THDEiCP, that is, QEiCP, whose matrices $A$ and $B$ are uniformly distributed in $(0,1)$ and given by
$$A=\left(\begin{array}{cccc}
    0.2296  &  0.6870  &  0.7421  &  0.8943 \\
    0.6870  &  0.9403  &  0.1194  &  0.5919 \\
    0.7421  &  0.1194  &  0.9325  &  0.7779 \\
    0.8943  &  0.5919  &  0.7779  &  0.3290
\end{array}\right),\;\;B=\left(\begin{array}{cccc}
    0.2235  &  0.3014 &   0.7879  &  0.5394\\
    0.3014  &  0.4026 &   0.5329  &  0.5453\\
    0.7879  &  0.5329 &   0.8272  &  0.5375\\
    0.5394  &  0.5453 &   0.5375  &  0.5994
\end{array}\right).$$
\end{example}

\begin{example}\label{exam2}
We consider the case where $\A$ and $\Bc$ are two 3-rd order 4-dimensional tensors; $\mathcal{A}$ is strictly copositive, but not nonnegative, and $\Bc$ is a randomly generated tensor, whose entries are uniformly distributed in $(1,2)$, that is,
$$
\mathcal{A}(:,:,1)=
\left(\begin{array}{cccc}
2&2&4/3&4/3\\
2&4/3&2/3&4/3\\
4/3&2/3&8/3&0\\
4/3&4/3&0&2
\end{array}\right),~~
\mathcal{B}(:,:,1)=
\left(\begin{array}{cccc}
    1.6557 &   1.3572 &   1.7523  &  1.6055\\
    1.3572 &   1.7577 &   1.4572  &  1.2192\\
    1.7523 &   1.4572 &   1.7060  &  1.0645\\
    1.6055 &   1.2192 &   1.0645  &  1.8235
\end{array}\right),
$$
$$
\mathcal{A}(:,:,2)=
\left(\begin{array}{cccc}
2&4/3&2/3&4/3\\
4/3&12&-2/3&10/3\\
2/3&-2/3&16/3&-2\\
4/3&10/3&-2&14/3
\end{array}\right),~~
\mathcal{B}(:,:,2)=
\left(\begin{array}{cccc}
    1.6551 &   1.5612 &   1.4351 &   1.6946\\
    1.5612 &   1.3404 &   1.4202 &   1.5916\\
    1.4351 &   1.4202 &   1.5060 &   1.6231\\
    1.6946 &   1.5916 &   1.6231 &   1.1386
\end{array}\right),
$$
$$
\mathcal{A}(:,:,3)=
\left(\begin{array}{cccc}
4/3&2/3&8/3&0\\
2/3&-2/3&16/3&-2\\
8/3&16/3&6&2/3\\
0&-2&2/3&0
\end{array}\right)
,~~
\mathcal{B}(:,:,3)=
\left(\begin{array}{cccc}
    1.9172 &   1.3331 &   1.6440 &   1.6613\\
    1.3331 &   1.5678 &   1.4275 &   1.2617\\
    1.6440 &   1.4275 &   1.9340 &   1.0709\\
    1.6613 &   1.2617 &   1.0709 &   1.3371
\end{array}\right),
$$
$$
\mathcal{A}(:,:,4)=
\left(
\begin{array}{cccc}
4/3&4/3&0&2\\
4/3&10/3&-2&14/3\\
0&-2&2/3&0\\
2&14/3&0&4
\end{array}
\right),~~
\mathcal{B}(:,:,4)=
\left(\begin{array}{cccc}
    1.5383  &  1.5514 &   1.4477  &  1.3513\\
    1.5514  &  1.9619 &   1.4367  &  1.7875\\
    1.4477  &  1.4367 &   1.0844  &  1.4156\\
    1.3513  &  1.7875 &   1.4156  &  1.9106
\end{array}\right).
$$
\end{example}

\begin{example}\label{exam3}
We consider two $4$-th order $3$-dimensional symmetric tensors $\A$ and $\Bc$, where $\A$ and $\Bc$ are randomly generated and uniformly distributed in $(0,1)$. Specifically, $\A$ and $\Bc$ are taken as follows:
$$\A(:,:,1,1)=\lb\begin{array}{ccccc}
  0.6229  &&  0.2644  &&  0.3567 \\
    0.2644  &&  0.0475  && 0.7367\\
    0.3567  &&  0.7367 &&   0.1259
\end{array}\rb,\; \Bc(:,:,1,1)=\lb\begin{array}{ccccc}
    0.6954  &&  0.4018  &&  0.1406\\
    0.4018  &&  0.9957  &&  0.0483\\
    0.1406  &&  0.0483  &&  0.0988
\end{array}\rb, $$
$$\A(:,:,1,2)=\lb\begin{array}{ccccc}
  0.7563  &&  0.5878  &&  0.5406\\
    0.5878  &&  0.1379  &&  0.0715\\
    0.5406  &&  0.0715  &&  0.3725
\end{array}\rb, \;\Bc(:,:,1,2)=\lb\begin{array}{ccccc}
    0.6730 &&   0.5351 &&   0.4473\\
    0.5351 &&   0.2853 &&   0.3071\\
    0.4473 &&   0.3071 &&   0.9665
\end{array}\rb,$$
$$\A(:,:,1,3)=\lb\begin{array}{ccccc}
    0.0657  &&  0.4918  &&  0.9312\\
    0.4918  &&  0.7788  &&  0.9045\\
    0.9312  &&  0.9045  &&  0.8711
\end{array}\rb,\; \Bc(:,:,1,3)=\lb\begin{array}{ccccc}
    0.7585 &&   0.6433 &&  0.2306\\
    0.6433 &&   0.8986 &&   0.3427\\
    0.2306 &&   0.3427 &&   0.5390
\end{array}\rb, $$
$$\A(:,:,2,1)=\lb\begin{array}{ccccc}
    0.7563  && 0.5878  &&  0.5406\\
    0.5878  &&  0.1379 &&  0.0715\\
    0.5406  &&  0.0715 &&  0.3725
\end{array}\rb,\Bc(:,:,2,1)=\lb\begin{array}{ccccc}
    0.6730  &&  0.5351 &&   0.4473\\
    0.5351  &&  0.2853 &&   0.3071\\
    0.4473  &&  0.3071 &&   0.9665
\end{array}\rb, $$
$$\A(:,:,2,2)=\lb\begin{array}{ccccc}
    0.7689  &&  0.3941  &&  0.6034\\
    0.3941  &&  0.3577  &&  0.3465\\
    0.6034  &&  0.3465  &&  0.4516
\end{array}\rb,\;\Bc(:,:,2,2)=\lb\begin{array}{ccccc}
    0.3608  &&   0.3914 &&   0.5230\\
    0.3914  &&  0.6822  &&  0.5516\\
    0.5230  &&  0.5516  &&  0.7091
\end{array}\rb,$$
$$ \A(:,:,2,3)=\lb\begin{array}{ccccc}
    0.8077 &&   0.4910  &&  0.2953\\
    0.4910 &&   0.5054  &&  0.5556\\
    0.2953 &&   0.5556  &&  0.9608
\end{array}\rb,\; \Bc(:,:,2,3)=\lb\begin{array}{ccccc}
    0.4632 &&   0.2043 &&   0.2823\\
    0.2043 &&   0.7282 &&   0.7400\\
    0.2823 &&   0.7400 &&   0.9369
\end{array}\rb,$$
$$\A(:,:,3,1)=\lb\begin{array}{ccccc}
     0.0657  &&  0.4918  &&  0.9312\\
    0.4918  &&  0.7788 &&   0.9045\\
    0.9312  &&  0.9045 &&   0.8711
\end{array}\rb,\; \Bc(:,:,3,1)=\lb\begin{array}{ccccc}
    0.7585 &&   0.6433 &&   0.2306\\
    0.6433 &&   0.8986 &&   0.3427\\
    0.2306 &&   0.3427 &&   0.5390
\end{array}\rb, $$
$$\A(:,:,3,2)=\lb\begin{array}{ccccc}
 0.8077  &&  0.4910  &&  0.2953\\
    0.4910  &&  0.5054  &&  0.5556\\
    0.2953  &&  0.5556  &&  0.9608
\end{array}\rb, \;\Bc(:,:,3,2)=\lb\begin{array}{ccccc}
    0.4632  &&  0.2043 &&   0.2823\\
    0.2043  &&  0.7282 &&   0.7400\\
    0.2823  &&  0.7400 &&   0.9369
\end{array}\rb,$$
$$\A(:,:,3,3)=\lb\begin{array}{ccccc}
    0.7581 &&   0.7205 &&   0.9044\\
    0.7205 &&   0.0782 &&   0.7240\\
    0.9044 &&   0.7240 &&   0.3492
\end{array}\rb,\;\Bc(:,:,3,3)=\lb\begin{array}{ccccc}
    0.8200  &&  0.5914  &&  0.4983\\
    0.5914  &&  0.0762  &&  0.2854\\
    0.4983  &&  0.2854  &&  0.1266
\end{array}\rb.$$
\end{example}

Before our experiments, we first introduce a reasonable stopping rule for the proposed method \eqref{LADM}. Without loss of generality, we can use
\[\label{stopping} {\rm RelErr}:=\max\left\{\|u^{(k+1)}-u^{(k)}\|,\;\|v^{(k+1)}-v^{(k)}\|,\;|{\bm V}^{(k)}|\right\}\leq {\rm Tol}\]
as a termination criterion to pursue an approximate solution with a preset tolerance `Tol', where $|{\bm V}^{(k)}|:=|\A (u^{(k+1)})^m+\I (v^{(k+1)})^m-1|$ measures the violation of the underlying equality constraint. For the parameters involved in our algorithm, we throughout take $\beta=1$, in addition to setting the starting points $u^{(0)}$ and $v^{(0)}$ as randomly generated vectors and $\bzeta^{(0)}=0$. For the other two parameters, we choose $\gamma_1=200$ and $\gamma_2=10$ for Example \ref{exam1}, $\gamma_1=1000$ and $\gamma_2=50$ for Examples \ref{exam2}--\ref{exam3}. The tolerance `Tol' in \eqref{stopping} is taken as ${\rm Tol}=10^{-6}$ for all tests.

As we have mentioned in introduction, tensor-related polynomial optimization problems suffers from {\it high nonlinearity}. From a theoretical point of view, {\it linearization} may destroy structural properties of the underlying functions, thereby resulting in nonideal approximations so that the algorithm is not necessarily convergent for some cases. To investigate the performance of such a linearization, in Fig. \ref{fig1}, we plot  evolutions of the relative error (`RelErr' defined by \eqref{stopping}) and the objective value (`Obj.') of \eqref{mini} [i.e., $-\varphi_0(u^{(k)},v^{(k)})$] with respect to the number of iterations, respectively, and the ability of finding ideal solutions of THDEiCP.

\begin{figure}[!htbp]
\centering
\includegraphics[width=0.49\textwidth]{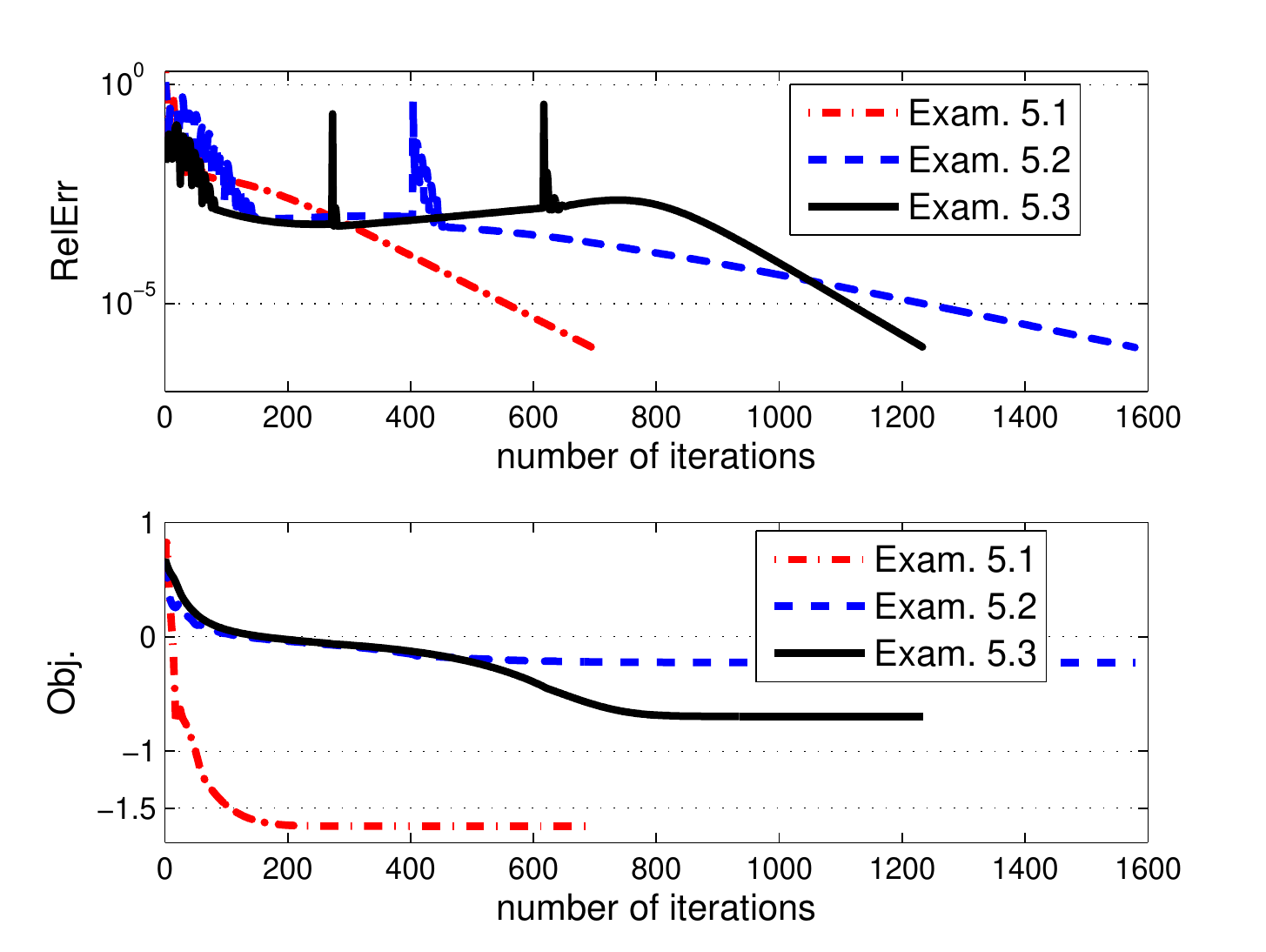}
\includegraphics[width=0.49\textwidth]{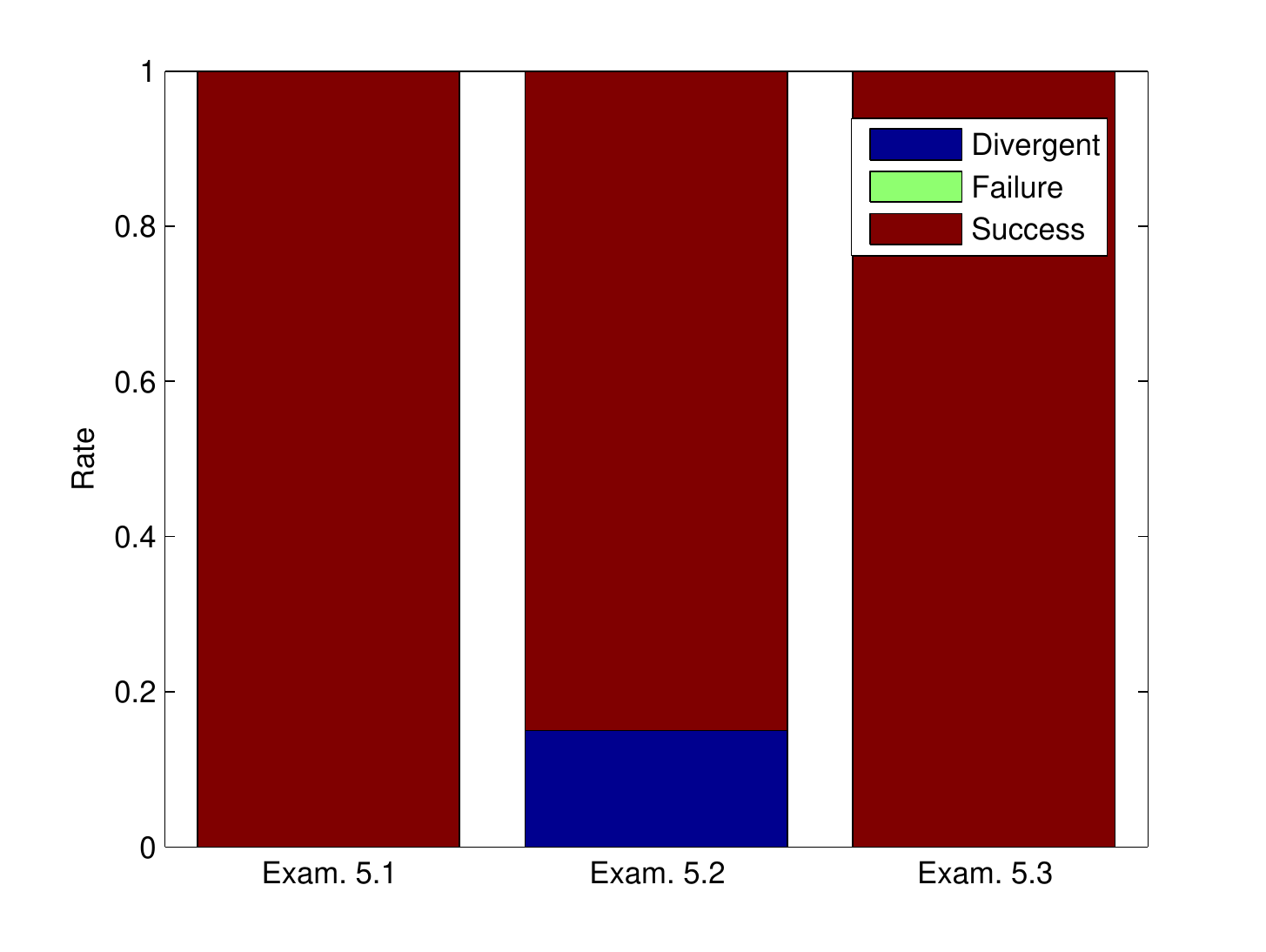}
\caption{Performance of the proposed algorithm. The left plot corresponds to the evolutions of `RelErr' defined by \eqref{stopping} and the objective value of \eqref{mini} [i.e., $-\varphi_0(u^{(k)},v^{(k)})$] with respect to the number of iterations, respectively. The right one shows the ability and reliability of the algorithm by testing $100$ randomly generated starting points.}\label{fig1}
\end{figure}

From the left plots of Fig. \ref{fig1}, we can see that our linearized ADMM is convergent very fast with a random starting point. More importantly, it clearly shows that the high nonlinearity leads to severely oscillating property in terms of the relative error. Actually, our computational experiences tell us that an inappropriate initial point far away a local solution may lead to divergence (see the right plot in Fig. \ref{fig1}), which also implies that designing an implementable and stable algorithm for THDEiCP is a challengeable task. To further verify the ability and reliability of our algorithm, we randomly generate $100$ different starting points such that $u^{(0)}=v^{(0)}$ and their entries are uniformly distributed in $(0,1)$. As we have proved in Theorem \ref{EqEF}, a solution $u^{(k)}$ of THDEiCP must be a nonzero vector. In our experiments, we observe that the algorithm terminates at a zero point in some cases. Accordingly, we record all results of the $100$ tests and divide them into three groups: the first group corresponds to the divergent cases, which means the number of iterations exceeds the preset maximum iterations $20000$; the second group refers to convergent cases but failed to find a nonzero solution of THDEiCP; the last group contains the cases of successfully finding a nonzero solution of THDEiCP. The corresponding rate of each group is graphically shown by the right plot in Fig. \ref{fig1}, which empirically exhibits the ability and reliability of our proposed algorithm. Indeed, we did a lot of experiments on QEiCP, and interestingly, all numerical results shows that the proposed linearized ADMM is always convergent for QEiCP. Thus, such an algorithm further enriches the solvers tailored for QEiCP. This also leaves us an open problem of whether there exists provable global or local convergence for the linearized ADMM on solving the nonconvex model \eqref{mini}.

In Table \ref{table1}, we list several groups of numerical results including starting points $u^{(0)}$, eigenvalue (EigVal $\lambda$), eigenvector ($x$), dual variable ${\bm \varrho}$, number of iterations (`Iter.') and computing time in seconds (Time), where the dual variable is defined by
$${\bm \varrho}:=\lambda^m\A x^{m-1}+\lambda\Bc x^{m-1}-\I x^{m-1},$$
which together with the eigenvector $x$ satisfies $x^\t {\bm \varrho}=0$.

\setlength\rotFPtop{0pt plus 1fil}
\begin{sidewaystable}
\begin{center}
\caption{Computational results by the proposed algorithm.}\vskip 0.2mm
\label{table1}
\def\temptablewidth{1\textwidth}
\begin{tabular*}{\temptablewidth}{@{\extracolsep{\fill}}llllllll}\toprule
Example&Initial Points ($u^{(0)}=v^{(0)}$) & EigVal ($\lambda$) & Eigenvector ($x$) & Dual variable (${\bm\varrho}$) & Iter. & Time\\\midrule
Exam. \ref{exam1}& $(0.3829,0.0846,0.7339,0.3320)^\t$ & 0.6830 & $(0.0000,0.0000,0.5701,0.0000)^\t$ & $(0.5042,0.2393,0.0000,0.4162)^\t$ & 326 & 0.32  \\
& $(0.8397,0.3717,0.8282,0.1765)^\t$ & 1.6563 & $(1.2973,0.0000,0.0000,0.0000)^\t$ & $(0.0000,3.0929,4.3342,4.3419)^\t$ & 696 & 0.51  \\
& $(0.1295,0.8799,0.0441,0.6867)^\t$ & 0.8392 & $(0.0000,0.6509,0.0000,0.0000)^\t$ & $(0.4795,0.0000,0.3458,0.5691)^\t$ & 464 & 0.33  \\
& $(0.7338,0.4372,0.3798,0.9797)^\t$ & 1.0561 & $(0.0000,0.0000,0.0000,0.9032)^\t$ & $(1.4153,1.1163,1.2963,0.0000)^\t$ & 576 & 0.45  \\ \hline
Exam. \ref{exam2}& $(0.4030,0.5100,0.4956,0.6514)^\t$ & 0.3947 & $(0.0000,0.0000,0.0000,0.4350)^\t$ & $(0.1242,0.1878,0.1057,0.0000)^\t$ & 1459 & 1.37  \\
& $(0.7437,0.3020,0.0896,0.8260)^\t$ & 0.4747 & $(0.5310,0.0000,0.0000,0.0000)^\t$ & $(0.0000,0.2420,0.2748,0.2551)^\t$ & 1580 & 1.44  \\
& $(0.3896,0.7753,0.1794,0.1094)^\t$ & 0.3528 & $(0.0000,0.3497,0.0000,0.0000)^\t$ & $(0.0745,0.0000,0.0577,0.0866)^\t$ & 944 & 0.86  \\
& $(0.0369,0.5447,0.9976,0.5110)^\t$ & 0.3655 & $(0.0000,0.0000,0.3948,0.0000)^\t$ & $(0.1140,0.1219,0.0000,0.0661)^\t$ & 1481 & 1.36  \\ \hline
Exam. \ref{exam3}& $(0.7919,0.4522,0.8492)^\t$ & 1.2462 & $(0.0000,0.0000,1.1968)^\t$ & $(4.8039,3.6034,0.0000)^\t$ & 743 & 0.72  \\
& $(0.5233,0.4299,0.2072)^\t$ & 0.8860 & $(0.9628,0.0000,0.0000)^\t$ & $(0.0000,0.4632,0.3074)^\t$ & 1234 & 1.12  \\
& $(0.1203,0.6255,0.3466)^\t$ & 0.9807 & $(0.0000,1.0863,0.0000)^\t$ & $(0.9594,0.0000,1.1045)^\t$ & 897 & 0.83  \\\bottomrule
\end{tabular*}
\end{center}
\end{sidewaystable}

It can be easily seen from the data in Table \ref{table1} that our proposed algorithm is fast and reliable for solving the model under consideration, even it is not necessarily convergent for some cases. We conducted many simulations on random data and observed that an appropriate initial point and the parameters, especially the two $\gamma_1$ and $\gamma_2$ are very important for convergence. Therefore, we will pay our attention on the study of convergence results of the iterative scheme \eqref{LADM} in the future. Additionally, we will provide some practical suggestions on choices of $\gamma_1$ and $\gamma_2$.

\section{Conclusions}\label{Concl}
This paper considers a unified model of THDEiCP including TGEiCP and QEiCP as its special cases. As the first work on finding higher-degree cone eigenvalues of tensors, we analyze some corresponding topological properties including closedness, boundedness, and upper-semicontinuity of the $m$-degree $K$-spectrum of ${\mathcal Q}$ (i.e., $\sigma({\mathcal Q},K)$ defined by \eqref{sigmaK}), and the number of $m$-degree Pareto- and $K$-eigenvalues of THDEiCP. For the special case where the underlying tensors $\A$ and $\Bc$ are symmetric, $\Cc=-\I$, and $K:=\R^n_+$, we present a weakly coupled optimization formulation for THDEiCP, which is also a new formulation for QEiCP in the literature. Moreover, such a formulation could bring some numerical benefits for algorithmic design, for instance, an implementable linearized ADMM is developed in this paper. Theoretically, we establish results concerning existence of solutions of THDEiCP with general (symmetric and asymmetric) $\A$ and $\Bc$ when $\Cc:=\pm\I$ and $K:=\R^n_+$. In the future, we will study along this line, but with general tensor $\Cc$ not being a unit tensor (i.e., $\Cc\neq \pm\I$). Of course, the convergence analysis of the proposed algorithm and
designing algorithms for THDEiCP, especially for asymmetric cases, are also our future concerns.

\begin{acknowledgements}
The first two authors would like to thank Professor Deren Han for his comments on the numerical algorithm. This research of the first two authors was supported in part by National Natural Science Foundation of China (11171083, 11301123) and the Zhejiang Provincial NSF (LZ14A010003). The third author was supported by the Hong Kong Research Grant Council (Grant Nos. PolyU 502111, 501212, 501913, and 15302114).
\end{acknowledgements}

%

\end{document}